\newtheorem{thm*}{Theorem}
\newtheorem{thm}{Theorem}
\newtheorem{lemma}{Lemma}
\newtheorem{remark}{Remark}
\newtheorem{prop}{Proposition}
\begin{document}

\def\W{ \widehat{X}  } 

\def\d{ \partial_{x_j} }
\def\Na{{\mathbb{N}}}

\def\Z{{\mathbb{Z}}}

\def\IR{{\mathbb{R}}}

\newcommand{\E}[0]{ \varepsilon}

\newcommand{\la}[0]{ \lambda}

\newcommand{\s}[0]{ \mathcal{S}}

\newcommand{\AO}[1]{\| #1 \| }

\newcommand{\BO}[2]{ \left( #1 , #2 \right) }

\newcommand{\CO}[2]{ \left\langle #1 , #2 \right\rangle}

\newcommand{\R}[0]{ \IR\cup \{\infty \} }

\newcommand{\co}[1]{ #1^{\prime}}

\newcommand{\p}[0]{ p^{\prime}}

\newcommand{\m}[1]{   \mathcal{ #1 }}


\newcommand{ \A}[1]{ \left\| #1 \right\|_H }

\newcommand{\B}[2]{ \left( #1 , #2 \right)_H }

\newcommand{\C}[2]{ \left\langle #1 , #2 \right\rangle_{  H^* , H } }

 \newcommand{\HON}[1]{ \| #1 \|_{ H^1} }

\newcommand{ \Om }{ \Omega}

\newcommand{ \pOm}{\partial \Omega}

\newcommand{\D}{ \mathcal{D} \left( \Omega \right)}

\newcommand{\DP}{ \mathcal{D}^{\prime} \left( \Omega \right)  }

\newcommand{\DPP}[2]{   \left\langle #1 , #2 \right\rangle_{  \mathcal{D}^{\prime}, \mathcal{D} }}

\newcommand{\PHH}[2]{    \left\langle #1 , #2 \right\rangle_{    \left(H^1 \right)^*  ,  H^1   }    }

\newcommand{\PHO}[2]{  \left\langle #1 , #2 \right\rangle_{  H^{-1}  , H_0^1  }}

 \newcommand{\HO}{ H^1 \left( \Omega \right)}

\newcommand{\HOO}{ H_0^1 \left( \Omega \right) }

\newcommand{\CC}{C_c^\infty\left(\Omega \right) }

\newcommand{\N}[1]{ \left\| #1\right\|_{ H_0^1  }  }

\newcommand{\IN}[2]{ \left(#1,#2\right)_{  H_0^1} }

\newcommand{\INI}[2]{ \left( #1 ,#2 \right)_ { H^1}}

\newcommand{\HH}{   H^1 \left( \Omega \right)^* }

\newcommand{\HL}{ H^{-1} \left( \Omega \right) }

\newcommand{\HS}[1]{ \| #1 \|_{H^*}}

\newcommand{\HSI}[2]{ \left( #1 , #2 \right)_{ H^*}}

\newcommand{\WO}{ W_0^{1,p}}
\newcommand{\w}[1]{ \| #1 \|_{W_0^{1,p}}}

\newcommand{\ww}{(W_0^{1,p})^*}

\newcommand{\Ov}{ \overline{\Omega}}

\author{C. Cowan\thanks{Department of Mathematics, University of Manitoba, Winnipeg, Manitoba, Canada R3T 2N2. Email: craig.cowan@umanitoba.ca. Research supported in part by NSERC.} }

\title{Supercritical elliptic problems involving  a Cordes like operator}
\maketitle

\begin{abstract}

In this work we obtain positive bounded solutions of various perturbations of 
\begin{equation} 
 \left\{ \begin{array}{lcl}
\hfill  -\Delta u - \gamma \sum_{i,j=1}^N \frac{x_i x_j}{|x|^2} u_{x_i x_j}  &=&  u^p \qquad \mbox{ in } B_1,   \\
\hfill  u &=& 0 \hfill \mbox{ on }   \partial B_1,
\end{array}\right.
  \end{equation}  where $B_1$ is the unit ball in $ \IR^N$ where $N \ge 3$, $ \gamma>0$ and     $ 1<p<p_{N,\gamma}$ where 
   \begin{equation*} 
 p_{N,\gamma}:=\left\{ \begin{array}{lc}
 \frac{N+2+3 \gamma}{N-2-\gamma} &  \qquad \mbox{ if } \gamma<N-2,    \\
\infty &  \qquad  \mbox{ if }   \gamma \ge N-2.
\end{array}\right.
  \end{equation*}   Note for $\gamma>0$ this allows for supercritical range of $p$.

\end{abstract}


\section{Introduction} 


In this work we are interested in obtaining positive bounded solutions of various perturbations of 
\begin{equation} \label{main_eq}
 \left\{ \begin{array}{lcl}
\hfill  -\Delta u - \gamma \sum_{i,j=1}^N \frac{x_i x_j}{|x|^2} u_{x_i x_j}  &=&  u^p \qquad \mbox{ in } B_1 \backslash \{0\},   \\
\hfill  u &=& 0 \hfill \mbox{ on }   \partial B_1,
\end{array}\right.
  \end{equation} 
  where $B_1$ is the unit ball in $ \IR^N$ where $N \ge 3$, $ \gamma>0$ and     $ 1<p<p_{N,\gamma}$  where  \begin{equation*} 
  p_{N,\gamma}:= 
  \left\{ 
  \begin{array}{cl}
   \frac{N+2+3 \gamma}{N-2-\gamma} & \quad \mbox{ if } \; \;  \gamma< N-2,\\
   \infty &  \quad \mbox{ if } \; \; \gamma \ge N-2.
   \end{array}
   \right.
   \end{equation*}   Note for $\gamma>0$  this includes a supercritical range of $p$, ie. $ p > \frac{N+2}{N-2}$.  The linear operator on the left hand side of (\ref{main_eq}) 
   is a known operator that has seen some investigation, see Section  \ref{cordes_sec} for more details.   The two main perturbations of (\ref{main_eq}) we consider are 
  \begin{equation} \label{eq_zero_pert}
 \left\{ \begin{array}{lcl}
\hfill  -\Delta u - \gamma \sum_{i,j=1}^N \frac{x_i x_j}{|x|^2} u_{x_i x_j}  &=& (1+\delta g(x)) u^p \qquad \mbox{ in } B_1 \backslash \{0\},   \\
\hfill  u &=& 0 \hfill \mbox{ on }   \partial B_1,
\end{array}\right.
  \end{equation} where $g$ is a fixed H\"older continuous function and $ \delta>0$ is a  small parameter;   and 
  
  \begin{equation} \label{eq_second_pert}
 \left\{ \begin{array}{lcl}
\hfill  -\Delta u - \gamma \sum_{i,j=1}^N \frac{x_i x_j}{|x|^2} u_{x_i x_j}  &=&  u^p \qquad \mbox{ in } \Omega_\delta \backslash \{0\},   \\
\hfill  u &=& 0 \hfill \mbox{ on }   \partial \Omega_\delta,
\end{array}\right.
  \end{equation} where $ \Omega_\delta$ is a small $C^2$ perturbation of $B_1$; see Section \ref{pert_111} for details.

  \begin{thm} \label{main_non_linear_zero} (Zero order perturbation)  Suppose $N \ge 3$ and $ 1<p<p_{N,\gamma}$ and $ g$ is a H\"older continuous function.  Then for sufficiently small $ \delta$ there is a positive solution $u \in C^{2,\alpha}_{loc}( \overline{B_1} \backslash \{0\}) \cap L^\infty$ of (\ref{eq_zero_pert}).
  \end{thm}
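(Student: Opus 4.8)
The plan is to obtain $u$ as a small perturbation, via the implicit function theorem, of the radial solution of the unperturbed problem \ref{main_eq}, exploiting the fact that on radial functions the operator $Lu:=-\Delta u-\gamma\sum_{i,j}\frac{x_ix_j}{|x|^2}u_{x_ix_j}$ becomes one-dimensional. Indeed, if $u=u(|x|)$ then $\sum_{i,j}\frac{x_ix_j}{|x|^2}u_{x_ix_j}=u_{rr}$, so $Lu=-(1+\gamma)\big(u_{rr}+\tfrac{M-1}{r}u_r\big)$ with $M:=\frac{N+\gamma}{1+\gamma}$; that is, up to the factor $1+\gamma$, $L$ is the radial Laplacian in the (generally non-integer) dimension $M$, and one checks that $p_{N,\gamma}=\frac{M+2}{M-2}$ is exactly the associated critical Sobolev exponent (with the convention $p_{N,\gamma}=\infty$ when $M\le 2$, i.e.\ $\gamma\ge N-2$). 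Since $1<p<p_{N,\gamma}$, the reduced problem $-(1+\gamma)\big(w_{rr}+\tfrac{M-1}{r}w_r\big)=w^p$ on $(0,1)$ with $w_r(0)=0$, $w(1)=0$, admits a positive solution $w_0$: one minimizes the natural Rayleigh/Nehari quotient on the weighted radial space $H^1_0\big((0,1);r^{M-1}\,dr\big)$, whose embedding into $L^{p+1}(r^{M-1}\,dr)$ is compact precisely in this range (and for every $p<\infty$ when $M\le 2$), and then uses ODE regularity. The resulting $w_0$ is smooth and positive on $[0,1)$, radially decreasing, with $w_0'(1)<0$; moreover it is the \emph{unique} such solution, since the scaling $w(r)\mapsto\mu^{2/(p-1)}w(\mu r)$ leaving the equation invariant makes the first zero $R(\beta)$ of the shooting solution with $w(0)=\beta$ an explicit, strictly decreasing function of $\beta$. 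Putting $u_0(x):=w_0(|x|)$ yields a positive bounded solution of \ref{main_eq} lying in $C^{2,\alpha}_{loc}(\overline{B_1}\setminus\{0\})$.

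I then set up the perturbation. Using the linear theory for the Cordes-like operator $L$ (cf.\ Section \ref{cordes_sec}), I fix Banach spaces $X$ and $Y$ adapted to $L$ (for a Hölder exponent $\alpha$ taken small enough): $X$ consisting of functions that vanish on $\partial B_1$, are $C^{2,\alpha}$ up to $\overline{B_1}\setminus\{0\}$ and suitably controlled near $0$, and $Y$ of right-hand sides that are $C^{0,\alpha}$ away from $0$ and controlled near $0$, so that $L\colon X\to Y$ is a bounded isomorphism for the Dirichlet problem. Define $\mathcal{F}\colon X\times\IR\to Y$ by $\mathcal{F}(u,\delta):=Lu-(1+\delta g)(u^{+})^{p}$ with $u^{+}=\max\{u,0\}$, noting that $t\mapsto(t^{+})^{p}$ is $C^1$ because $p>1$. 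Then $\mathcal{F}(u_0,0)=0$, the map $\mathcal{F}$ is $C^1$ in a neighbourhood of $(u_0,0)$ (this is where the Hölder continuity of $g$ and the behaviour of $u_0$ near $\partial B_1$ enter), and $D_u\mathcal{F}(u_0,0)=L-pu_0^{p-1}$.

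The crux is that $D_u\mathcal{F}(u_0,0)\colon X\to Y$ is an isomorphism; given the Fredholm linear theory for $L$, this amounts to showing $L-pu_0^{p-1}$ has trivial kernel in $X$. Since $u_0$ is radial, multiplication by $pu_0^{p-1}$ preserves spherical modes, so it suffices to rule out, for each $k\ge 0$, a nontrivial solution of $-(1+\gamma)\big(\phi_{rr}+\tfrac{M-1}{r}\phi_r\big)+\frac{\lambda_k}{r^2}\phi-pu_0^{p-1}\phi=0$ on $(0,1)$ that is bounded at $0$ and vanishes at $r=1$, where $\lambda_k=k(k+N-2)$. For $k\ge 1$: as $u_0$ is radially decreasing, $\psi:=-u_0'>0$ on $(0,1]$, and differentiating $Lu_0=u_0^p$ (using that the coefficient matrix is $0$-homogeneous) shows $\psi$ solves the $k=1$ equation on $(0,1)$; since $\psi$ is a positive solution that does not vanish at $r=1$, a Picone-type argument forces the principal Dirichlet eigenvalue of the $k=1$ operator to be strictly positive, and monotonicity in $\lambda_k$ extends this to all $k\ge 1$, so there is no kernel in these modes. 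For $k=0$: the bounded-at-$0$ solutions form a one-dimensional space spanned by $\partial_\beta w(\,\cdot\,;\beta)|_{\beta=w_0(0)}$; because $R(\beta)$ is a non-constant differentiable function and $w_0'(1)\neq 0$, this solution does not vanish at $r=1$, so the radial mode has no kernel either.

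Therefore $D_u\mathcal{F}(u_0,0)$ is an isomorphism, and the implicit function theorem produces $\delta_0>0$ and a $C^1$ curve $\delta\mapsto u_\delta\in X$ with $u_\delta|_{\delta=0}=u_0$ and $\mathcal{F}(u_\delta,\delta)=0$ for $|\delta|<\delta_0$. Since $\|u_\delta-u_0\|_X\to 0$ as $\delta\to 0$, positivity follows region by region: $u_0\ge c>0$ on compact subsets of $\overline{B_1}\setminus(\{0\}\cup\partial B_1)$; $u_\delta\to w_0(0)>0$ uniformly near the origin; and near $\partial B_1$ the $C^1$-closeness to $u_0$ together with $\partial_\nu u_0<0$ (Hopf) gives $u_\delta>0$. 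Hence $(u_\delta^{+})^{p}=u_\delta^{p}$, and $u_\delta\in C^{2,\alpha}_{loc}(\overline{B_1}\setminus\{0\})\cap L^\infty$ is the required positive solution of \ref{eq_zero_pert}. The heart of the argument is the functional-analytic bookkeeping: choosing spaces in which $L$ is simultaneously an isomorphism across the singular point $0$, the Nemytskii map $u\mapsto(u^{+})^{p}$ is $C^1$ even though $u_0$ vanishes on $\partial B_1$, and $u_0$ is nondegenerate — the nondegeneracy of the higher modes being essentially automatic from the sign of $u_0'$, while the radial mode rests on the scaling structure of the Lane--Emden equation.
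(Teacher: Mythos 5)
Your skeleton follows the paper's strategy (radial solution via the reduction to the fractional dimension $N_\gamma=\frac{N+\gamma}{1+\gamma}$, mode-by-mode nondegeneracy, a linear isomorphism for the linearized operator, then perturbation), but there is a genuine gap at its center: you \emph{assume}, rather than prove, that $L_\gamma$ (and hence $L_\gamma+pw^{p-1}$) is an isomorphism between H\"older-type spaces that are ``suitably controlled near $0$''. The coefficients $x_ix_j/|x|^2$ are discontinuous at the origin, so no off-the-shelf Schauder or $L^p$ theory applies there; building spaces in which such an isomorphism holds is exactly the content of Theorems \ref{main_linear} and \ref{linear_L_space}, carried out in Mazzeo--Pacard weighted spaces whose admissible weight $\sigma$ is dictated by the indicial roots $\beta_k^{\pm}$ and changes character according to the sign of $\gamma-(N-2)$. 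Worse, for $0<\gamma<N-2$ one cannot have both boundedness at the origin and solvability across all spherical modes in a single weighted space: the paper must either take $\sigma>0$ (which permits $|x|^{-\sigma}$ blow-up and so destroys the $L^\infty$ conclusion) or take $\sigma<0$ on the subspace with no $k=0$ modes and treat the radial mode separately through the explicit ODE estimate of Theorem \ref{linear_L_space} part 4, gluing the two pieces in the composite space $\widehat{X}$. Your assertion that $u_\delta\to w_0(0)$ uniformly near the origin — which is what you need for boundedness and positivity near $0$ — is precisely what this construction delivers, and it does not follow from an unspecified ``control near $0$''.

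Two further soft spots. First, the implicit function theorem route needs $u\mapsto(u^+)^p$ to be $C^1$ from $X$ to $Y$; Nemytskii maps between H\"older scales are notoriously not Fr\'echet differentiable without extra care, and for $p<2$ the derivative $p\,u^{p-1}$ degenerates where $u_0$ vanishes on $\partial B_1$. The paper sidesteps this entirely: it runs a contraction argument using only the pointwise inequalities (\ref{first_app})--(\ref{sec_app}) together with the boundary estimates $|\phi(x)|\le C R\,\mathrm{dist}(x,\partial B_1)$ and $w(x)\ge\varepsilon\,\mathrm{dist}(x,\partial B_1)$ (cf. (\ref{near_one})); once your spaces are actually fixed, switching from the IFT to this contraction scheme is the natural repair. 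Second, in your $k=0$ nondegeneracy step the criterion ``bounded at $0$'' does not cut the solution space down to one dimension when $\gamma>N-2$: the indicial roots are $0$ and $\frac{\gamma-(N-2)}{1+\gamma}\ge 0$, so \emph{both} Frobenius solutions are bounded there, and you must instead invoke the precise decay/gradient normalization of the (missing) function space, or argue as in Proposition \ref{kern_L} with the auxiliary function $r\,w'(r)$. Your $k\ge1$ argument via the positivity of $-w'$ matches the paper's and is fine.
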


  \begin{thm} \label{main_non_linear_second} (Second order perturbation)  Suppose $N \ge 3$ and $ 1<p<p_{N,\gamma}$.  
  
  \begin{enumerate} \item  Suppose $ \gamma>N-2$. Then for sufficiently small $ \delta$ there is a positive solution $u \in C^{2,\alpha}_{loc}( \overline{\Omega_\delta} \backslash \{0\}) \cap L^\infty$ of (\ref{eq_second_pert}). 
  
  \item Suppose $0<\gamma <N-2$.  Then for sufficiently small $ \delta$ there is a nonnegative nonzero  solution $u \in C^{2,\alpha}_{loc}( \overline{\Omega_\delta} \backslash \{0\}) \cap L^\infty$ of (\ref{eq_second_pert}). 
   
  \end{enumerate} 
  \end{thm}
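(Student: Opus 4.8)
The plan is to reduce the perturbed problem to a perturbation of an explicit radial solution of the unperturbed problem on the ball. The Cordes-type operator $L u := -\Delta u - \gamma \sum_{i,j} \frac{x_i x_j}{|x|^2} u_{x_i x_j}$ acts especially nicely on radial functions: if $u(x) = v(|x|)$ then $L u = -(1+\gamma) v'' - \frac{N-1}{r} v'$, which after the substitution $s = r^{(1+\gamma)/\cdots}$ (or rather rescaling the radial variable) becomes, up to a constant, a standard Laplacian in a fictitious dimension $m = \frac{N+\gamma}{1+\gamma}$. More precisely, one checks that $v \mapsto (1+\gamma)\big(v'' + \frac{m-1}{r} v'\big)$ with $m = \frac{N+\gamma}{1+\gamma}$, so radial solutions of $Lu = u^p$ correspond to solutions of the Lane--Emden equation in dimension $m$, whose critical exponent is $\frac{m+2}{m-2} = \frac{N+2+3\gamma}{N-2-\gamma}$ when $m > 2$, i.e. $\gamma < N-2$, and is $\infty$ when $m \le 2$, i.e. $\gamma \ge N-2$. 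This is exactly the threshold $p_{N,\gamma}$. So earlier in the paper there should be an explicit positive bounded radial solution $U$ of the unperturbed problem (\ref{main_eq}) on $B_1$; I will take that as given.

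With $U$ in hand, the strategy for both parts is a fixed-point / implicit-function argument around $U$. I would first diffeomorphically transport $\Omega_\delta$ back to $B_1$ via a $C^2$-small map $\Phi_\delta$ (with $\Phi_0 = \mathrm{id}$), turning (\ref{eq_second_pert}) into a problem on $B_1$ with an operator $L_\delta$ that is a small $C^{0,\alpha}$-perturbation of $L$ with nonlinearity $a_\delta(x) u^p$, $a_\delta \to 1$; in the zero-order case (\ref{eq_zero_pert}) this step is free, the operator is already $L$ and the weight is $1 + \delta g$. Writing $u = U + \varphi$, the equation becomes $L\varphi - p U^{p-1}\varphi = N_\delta(\varphi)$ where $N_\delta$ collects the higher-order terms in $\varphi$ plus the $O(\delta)$ terms coming from the perturbation; I would solve this in a weighted Hölder space adapted to the singularity at the origin (something like $C^{2,\alpha}_\mu$ with a weight $|x|^\mu$), exactly the space in which $U$ lives. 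The linearized operator $\mathcal{L} := L - p U^{p-1}$ must be shown to be an isomorphism on the appropriate weighted spaces — this is where a nondegeneracy / invertibility hypothesis enters, and presumably the construction of $U$ (or a companion lemma) guarantees it, perhaps after noting $\mathcal{L}$ has trivial kernel in the chosen space for $p$ below the Joseph--Lundgren-type threshold. Then a contraction mapping argument on $\varphi = \mathcal{L}^{-1} N_\delta(\varphi)$ in a small ball of the weighted space yields a solution for $\delta$ small; elliptic regularity away from $0$ upgrades it to $C^{2,\alpha}_{loc}(\overline{\Omega_\delta}\setminus\{0\})$, and the weighted bound plus $U \in L^\infty$ gives $u \in L^\infty$.

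The dichotomy in Theorem \ref{main_non_linear_second} between $\gamma > N-2$ (positive solution) and $0 < \gamma < N-2$ (only nonnegative nonzero) reflects how much room the weighted space leaves near the origin. When $\gamma > N-2$, equivalently $m < 2$, the fictitious dimension is subcritical/low and $U$ together with the perturbation $\varphi$ stays strictly positive up to the origin, so $u > 0$ throughout; when $0 < \gamma < N-2$ the relevant decay/blow-up rates near $0$ are such that the sign of $u$ near $0$ cannot be controlled by the perturbative estimate, so one only concludes $u \ge 0$, $u \not\equiv 0$ (and one would still want a maximum-principle remark to rule out $u$ vanishing on an open set, hence "nonnegative nonzero"). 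I would handle this by tracking the weight exponent $\mu$ against the indicial roots of $\mathcal{L}$ at $r=0$: in the $\gamma > N-2$ regime the admissible $\mu$ can be chosen so that $\varphi = o(U)$ as $r \to 0$, forcing positivity; otherwise it cannot.

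The main obstacle I expect is precisely the invertibility of the linearized operator $\mathcal{L} = L - pU^{p-1}$ on the weighted Hölder spaces, together with choosing the weight so that (i) $\mathcal{L}$ is Fredholm of index zero with trivial kernel, (ii) the nonlinear and perturbative remainders $N_\delta$ map the space into itself with small norm, and (iii) in the $\gamma>N-2$ case the weight is strong enough to preserve positivity. Since the singular coefficient $\frac{x_ix_j}{|x|^2}$ is bounded but discontinuous at $0$, one cannot invoke off-the-shelf Schauder theory at the origin; instead the radial reduction to dimension $m$ should be exploited to get the linear theory (Fredholm properties, indicial roots $\mu_\pm$ solving $\mu(\mu + m - 2) = $ something) by ODE / separation-of-variables methods, and this is the technical heart of the argument. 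Everything else — the diffeomorphism, the contraction estimate, the regularity bootstrap — is routine once the linear isomorphism is established.
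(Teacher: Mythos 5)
Your overall architecture matches the paper: the radial solution via the fictitious dimension $N_\gamma=\frac{N+\gamma}{1+\gamma}$, the transfer of $\Omega_\delta$ to $B_1$ by the diffeomorphism $y=x+\delta\psi(x)$, linearization around the radial solution, invertibility of $L_\gamma+pw^{p-1}$ on Mazzeo--Pacard-type weighted spaces via indicial-root/ODE analysis plus the nondegeneracy of $w$ (Proposition \ref{kern_L}), and a contraction argument for small $\delta$; your weighted H\"older spaces versus the paper's weighted $L^t$ spaces with $t>N$ is an immaterial difference, and your explanation of the $\gamma>N-2$ versus $\gamma<N-2$ dichotomy through the admissible weight is the right heuristic and covers part 1.

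However, for part 2 ($0<\gamma<N-2$) your proposal has a genuine gap. In that regime the admissible weight for the full operator (all spherical-harmonic modes, Theorem \ref{main_linear} part 1) is necessarily $\sigma>0$, so the fixed point $\phi$, hence $u=w+\phi$, is a priori only controlled like $|x|^{-\sigma}$ near the origin: it may change sign there and may be unbounded. Your sentence ``the weighted bound plus $U\in L^\infty$ gives $u\in L^\infty$'' is simply false in this case, and ``one only concludes $u\ge 0$, $u\not\equiv 0$'' is not a conclusion at all --- nonnegativity is precisely what must be proved, and the difficulty is the sign near the origin, not vanishing on an open set. The paper supplies two nontrivial extra steps that your outline omits: (i) a maximum principle for $L_\gamma$ (transported to $\Omega_\delta$) valid for functions in the weighted space with mild blow-up at $0$, proved by solving approximating problems on $\Omega_\delta\setminus B_\E$, passing to the limit with uniform weighted $W^{2,t}$ bounds, and using triviality of the kernel of the perturbed operator to identify the limit with $u$; applied to $-L_\gamma(u)=|u|^p\ge 0$ this yields $u\ge 0$; and (ii) a boundedness argument obtained by cutting off near the origin and splitting into the $k=0$ mode and the $k\ge 1$ modes: the $k\ge 1$ part is handled by rerunning the linear theory at a second, slightly negative weight $\sigma_1$ permitted by the mode-$1$ indicial window (\ref{1mode}) (Theorem \ref{main_linear} part 3), which forces that part to be bounded, while the $k=0$ mode is shown bounded directly from the explicit variation-of-parameters formula for the radial ODE (as in Theorem \ref{linear_L_space} part 4), using $|g_0(\tau)|\tau^{\sigma p}\le C$ with $\sigma>0$ small. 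Without these two steps you obtain only a nonzero solution with possible sign change and blow-up at the origin, which is strictly weaker than the statement of part 2.
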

  
  \begin{remark}
  We are not addressing the exact smoothness of the solution at the origin and we are also stating the results on  punctured domains.   Since the solutions are bounded one can easily show these are suitable weak solutions on the full domain (and not just the punctured domains). 
  \end{remark}

  \subsection{A Cordes  like operator} \label{cordes_sec}

    For $ \gamma>0$ we define 
  \[L_\gamma(\phi)(x):=\Delta \phi(x)+ \gamma \sum_{i,j=1}^N \frac{x_i x_j}{|x|^2} \phi_{x_i x_j} = \Delta \phi(x)+ \gamma \phi_{rr}(x),\] where we are using spherical coordinates for the last term; ie.  $ x= r \theta$ where $ r=|x|$ and $ \theta:=\frac{x}{|x|} \in S^{N-1}$.       This explicit operator is often examined when one examines elliptic operators of the form 
  \[ \tilde{L}(\phi):=-\sum_{i,j=1}^N a_{ij}(x) \phi_{x_i x_j},\] where $a_{ij}$ are such that the operator is uniformly elliptic,  but the $a_{ij}$'s are not continuous.  
  One defines the \emph{Cordes Condition} by: there is some small $\E>0$ such that  \begin{equation}  \label{Cordes_cond}
  \frac{\left( \sum_{i=1}^N a_{i,i}(x) \right)^2 }{  \left( \sum_{i,j=1}^N a_{i,j}(x)^2 \right)} \ge N-1+\E, 
  \end{equation} then the operator $\tilde{L}:H^2(\Omega) \cap H_0^1(\Omega) \rightarrow L^2(\Omega)$ is an isomorphism (assuming $\Omega$ is bounded with smooth boundary)   see \cite{cordes_1, cordes_2, cordes_3, cordes_4,cordes_5}  for results related to the Cordes Condition.  Our operator is typically used to show the optimality of the Cordes Condition results. 
 If we consider our explicit example we see  if $ \E>0$ such that 
  \begin{equation}  \label{Cordes_cond_2}
 \frac{(N+\gamma)^2}{N+2\gamma+\gamma^2} \ge N-1+\E 
  \end{equation}  then $ L_\gamma$ satisfies (\ref{Cordes_cond}).  Checking the details one sees that if $ 0<\gamma<\frac{N}{N-2}$  then we can apply the above result to see $L_\gamma$ is an isomorphism.   There are results that extend this result to show that $\tilde{L}: W^{2,p} \cap W^{1,p}_0 \rightarrow L^p$ is an isomorpism for $p$ close to $2$ and there are also results for $L_\gamma$ on various spaces, including Morrey spaces.  Our function spaces will allow us to obtain results (which will be sufficient to apply our fixed point argument) regarding $L_\gamma$ for the full range of $\gamma>0$.

We now define the function spaces, which are motivated by 
\cite{MP}. Towards this define $ A_s:=\{x \in \IR^N: s<|x|<2s \}$ and for $\sigma \in \IR$ and $ N <t < \infty$ ($t$ is chosen larger than $N$ just to allow us to apply the Sobolev Imbedding Theorem and obtain pointwise gradient bounds) define  
the spaces $Y=Y_{t,\sigma}$ and $ X=X_{t,\sigma}$ with norms given by 
\[ \|f\|_Y^t:=\sup_{0<s \le \frac{1}{2}} s^{(2+\sigma)t-N} \int_{A_s} |f(x)|^t dx \] 
\[ \| \phi \|_X^t:= \sup_{0<s \le \frac{1}{2}} s^{\sigma t-N} \left\{ \int_{A_s} | \phi|^t dx + s^t \int_{A_s}  | \nabla \phi|^t dx  + s^{2t} \int_{A_s} |D^2 \phi|^t dx \right\} \] where for the space $X$ we impose the boundary condition $ \phi=0$ on $ \partial B_1$.  We now consider the linear problem given by     

\begin{equation} \label{linear_100}
 \left\{ \begin{array}{lcl}
\hfill   L_\gamma(\phi)   &=&   f(x)\qquad \mbox{ in } B_1 \backslash \{0\},   \\
\hfill  \phi &=& 0 \hfill \mbox{ on }   \partial B_1,
\end{array}\right.
  \end{equation} with goal of proving existence of solutions  with suitable estimates on $\phi$ in terms of $ f$.  When looking for solutions of (\ref{linear_100}) we will decompose into spherical harmonics and hence we need to consider the eigenpairs of the Laplace-Beltrami operator $ \Delta_\theta$ on $ S^{N-1}$.   For $ k \ge $ we have 
  \[ -\Delta_\theta \psi_k(\theta)  =\lambda_k \psi_k(\theta), \quad \theta \in S^{N-1},\]  and where we $L^2(S^{N-1})$ normalize $ \psi_k$.  Note that $ \lambda_0=0$ (multiplicity 1),  $ \lambda_1= N-1$ (multiplicity $N$) and $ \lambda_2 = 2 N$.  We now state our theorem related to the Cordes operator.

\begin{thm} \label{main_linear} (Cordes operator result) Suppose $N \ge 3$ and $ N<t<\infty$.

\begin{enumerate} \item Suppose   $ 0<\gamma <N-2$ and  $ 0<\sigma<  \frac{N-2-\gamma}{1+\gamma}$.  Then $L_\gamma$ an isomorphism from $X$ to $Y$.

\item Suppose $ \gamma> N-2$ and  $ \frac{N-2-\gamma}{1+\gamma} < \sigma <0$.  Then $L_\gamma:X \rightarrow Y$ is an isomorphism.

\item Suppose $0<\gamma < N-2$ and 
\begin{equation} \label{1mode}
\frac{N-2-\gamma}{2(1+\gamma)} - \frac{\sqrt{ (N-2-\gamma)^2+4(1+\gamma)(N-2)}}{2(1+\gamma)} <\sigma<0.  
\end{equation}
Then $L_\gamma:X_1 \rightarrow Y_1$ is an isomorphism where $ X_1$, $Y_1$ are the closed subspaces of $X,Y$ with no $k=0$ modes. 

\end{enumerate}
\end{thm}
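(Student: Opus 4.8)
The plan is to prove all three parts of Theorem \ref{main_linear} by the same device: expand $f$ and $\phi$ in spherical harmonics, reduce the PDE $L_\gamma(\phi)=f$ to a sequence of ODEs in the radial variable $r$, solve each ODE explicitly by variation of parameters, and then control the solution operator uniformly in the harmonic index $k$ in the weighted norms defining $X$ and $Y$. Concretely, writing $f(x)=\sum_k f_k(r)\psi_k(\theta)$ and $\phi(x)=\sum_k \phi_k(r)\psi_k(\theta)$, the identity $L_\gamma(\phi)=\Delta\phi+\gamma\phi_{rr}$ together with $\Delta = \partial_{rr}+\frac{N-1}{r}\partial_r+\frac{1}{r^2}\Delta_\theta$ gives, for each mode,
\begin{equation*}
(1+\gamma)\phi_k''(r) + \frac{N-1}{r}\phi_k'(r) - \frac{\lambda_k}{r^2}\phi_k(r) = f_k(r), \qquad \phi_k(1)=0.
\end{equation*}
This is an Euler (equidimensional) operator, so its homogeneous solutions are powers $r^{\mu_k^\pm}$ where $\mu_k^\pm$ are the roots of the indicial equation $(1+\gamma)\mu(\mu-1)+(N-1)\mu-\lambda_k=0$, i.e. $(1+\gamma)\mu^2+(N-2)\mu-\lambda_k=0$. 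First I would record the two roots, noting $\mu_k^+\ge 0 \ge \mu_k^-$ with $\mu_0^+=0$, $\mu_0^-=-\frac{N-2}{1+\gamma}$, and for $k\ge 1$ the relevant exponent (the one that is summable/decaying near $0$) grows like $\sqrt{\lambda_k/(1+\gamma)}$ as $k\to\infty$. The boundary condition at $r=1$ and the requirement that $\phi$ lie in $X$ (which forces a specific decay rate at the origin dictated by $\sigma$) pin down a unique particular solution built from the Green's function of the Euler operator.

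The heart of the matter is the weighted estimate. The norms are dyadic: $\|f\|_Y^t = \sup_s s^{(2+\sigma)t-N}\int_{A_s}|f|^t$ essentially says $|f|\lesssim r^{-2-\sigma}$ in an averaged $L^t$-sense on annuli, and $\|\phi\|_X$ similarly encodes $|\phi|\lesssim r^{-\sigma}$ together with the natural scaling of one and two derivatives ($r\nabla\phi$, $r^2 D^2\phi$ at the same weight). So I would show the mode-$k$ Green's operator $G_k$ maps $r^{-2-\sigma}$-type data to $r^{-\sigma}$-type output with operator norm bounded \emph{independently of $k$}. The explicit variation-of-parameters formula
\begin{equation*}
\phi_k(r) = -\frac{1}{(1+\gamma)(\mu_k^+-\mu_k^-)}\left( r^{\mu_k^-}\!\!\int_0^r \!\! \tau^{-\mu_k^-}\tilde f_k(\tau)\,d\tau - r^{\mu_k^+}\!\!\int_0^r\!\! \tau^{-\mu_k^+}\tilde f_k(\tau)\,d\tau\right) + (\text{homogeneous correction for }\phi_k(1)=0),
\end{equation*}
where $\tilde f_k = r f_k$ absorbs the $r^{N-1}$ Wronskian weight, reduces everything to one-dimensional Hardy-type inequalities on $(0,1)$ with power weights; the admissible range of $\sigma$ is exactly the range for which the relevant Hardy inequalities hold, which is why the thresholds $\frac{N-2-\gamma}{1+\gamma}$ (comparing $-\sigma$ to $\mu_k^-$ for $k=0$) and the quadratic expression in \eqref{1mode} (comparing to the $k=1$ exponent $\mu_1^-$, a root of $(1+\gamma)\mu^2+(N-2)\mu-(N-1)=0$) appear. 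I would carry out the $k=0$ and $k=1$ modes by hand since they are the binding constraints, then observe that for $k\ge 2$ (part 1,2) or $k\ge 1$ (part 3, where the $k=0$ mode is excluded precisely because it is the obstruction) the separation $\mu_k^+-\mu_k^-\to\infty$ makes the estimates strictly better and uniform.

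Assembling: summing the modewise bounds (using orthonormality of $\psi_k$ in $L^2(S^{N-1})$ and that the weights are the same for every $k$, so Minkowski/Parseval in $\theta$ on each annulus survives the $\sup_s$) gives $\|\phi\|_X \le C\|f\|_Y$, i.e. $G:=L_\gamma^{-1}$ is bounded $Y\to X$; injectivity of $L_\gamma$ on $X$ follows because a homogeneous solution in $X$ must be modewise a combination of $r^{\mu_k^\pm}$ vanishing at $r=1$ and having the $X$-decay, which forces it to be zero in each mode; and surjectivity is exactly the construction of $\phi$ from $f$. For part 3 one restricts to $X_1,Y_1$ throughout, dropping the $k=0$ mode from every sum. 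The technical care needed—estimating $D^2\phi$, not just $\phi$, in the $X$-norm—is handled by differentiating the Euler ODE (or the integral formula) twice, which only shifts powers and does not change the range of validity. The main obstacle I anticipate is making the $k$-uniformity of the Hardy constants fully rigorous near the threshold values of $\sigma$ and checking that the two-derivative part of the $X$-norm does not impose an extra, stricter constraint on $\sigma$; I expect it does not, because $r^2D^2(r^{\mu}) = \mu(\mu-1)r^{\mu}$ scales identically, but this must be verified mode by mode.
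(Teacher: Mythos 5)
Your mode-by-mode reduction, the explicit variation-of-parameters solution of the Euler ODE, and the injectivity argument coincide with the paper's Lemma \ref{first_lemma}; but that lemma only claims constants $C_k$ that may depend on $k$, and the genuine gap in your proposal is the assembling step. You invoke ``Minkowski/Parseval in $\theta$ on each annulus'' to sum the modewise bounds, but $X$ and $Y$ are $L^t$-based with $t>N>2$: orthonormality of the $\psi_k$ is an $L^2(S^{N-1})$ statement and gives no control of $\int_{A_s}|\sum_k a_k\psi_k|^t$, while Minkowski's inequality would require $\sum_k\|b_k\psi_k\|_Y<\infty$, which is not controlled by $\|f\|_Y$ for a general $f\in Y$ (one only has $|b_k(r)|\le\|f(r\cdot)\|_{L^t(S^{N-1})}\|\psi_k\|_{L^{t'}}$, and $\|\psi_k\|_{L^t}\|\psi_k\|_{L^{t'}}$ together with the multiplicity $\sim k^{N-2}$ grows in $k$, so even modewise constants that decay like $\lambda_k^{-1}$ do not make the series converge). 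Relatedly, ``differentiating the Euler ODE twice'' only controls $a_k''$; the Hessian part of the $X$-norm also contains the angular pieces $r^{-2}a_k\nabla_\theta^2\psi_k$ and $r^{-1}a_k'\nabla_\theta\psi_k$, which carry extra factors of $\lambda_k$ and are ultimately handled by scaled interior $W^{2,t}$ elliptic estimates on dyadic annuli applied to the full solution, not mode by mode.

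The paper closes this gap with a different idea that your proposal lacks: from Lemma \ref{first_lemma} it gets a bound $D_m$ for data with modes $k\le m$, and then shows $\sup_m D_m<\infty$ by contradiction. Assuming not, it normalizes $\|\phi_m\|_X=1$, $\|f_m\|_Y\to0$, finds annuli $A_{s_m}$ carrying a fixed amount of the norm, and either (if $s_m$ stays away from $0$) extracts via weak $W^{2,t}_{loc}$ compactness a nonzero kernel element of $L_\gamma$ in $X$, contradicting the kernel computation, or (if $s_m\to0$) rescales $\zeta_m(z)=s_m^\sigma\phi_m(s_mz)$ and classifies decaying homogeneous solutions $C_kr^{\beta_k^+}+D_kr^{\beta_k^-}$ of $L_\gamma$ on $\IR^N\setminus\{0\}$ to reach a contradiction. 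Either this blow-up/compactness argument, or an alternative such as first proving the estimate in $L^2$-based weighted spaces (where Parseval is legitimate) and then upgrading to $t>N$ by scaled elliptic regularity, is needed; as written your summation step would fail. A minor slip besides: the indicial polynomial is $(1+\gamma)\mu^2+(N-2-\gamma)\mu-\lambda_k$, not $(1+\gamma)\mu^2+(N-2)\mu-\lambda_k$ (your stated thresholds are nonetheless the correct ones, so this is only an algebra typo).
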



\subsection{General background on the Lane-Emden equation} 


A well studied problem is the existence versus non-existence of positive solutions of the Lane-Emden equation given by
  \begin{eqnarray} \label{bound_p_d}
 \left\{ \begin{array}{lcl}
\hfill   -\Delta u   &=& u^p \qquad \mbox{ in } \Omega,  \\
\hfill u &=& 0 \qquad  \quad  \mbox{on } \pOm,
\end{array}\right.
  \end{eqnarray}
    where $\Omega$ is  a bounded domain in $ \IR^N$ with  $N \ge 3$. Define the critical exponent $p_s= \frac{N+2}{N-2}$ and note that it is related to the critical Sobolev imbedding exponent $ 2^*:=\frac{2N}{N-2} = p_s+1$.  For $ 1 < p < p_s$  $ H_0^1(\Omega) $ is compactly imbedded in $L^{p+1}(\Omega)$ and hence standard methods show the existence of a positive minimizer of
     \[ \min_{u \in H_0^1(\Omega) \backslash \{0\} } \frac{\int_\Omega | \nabla u|^2 dx }{  \left( \int_\Omega |u|^{p+1} dx \right)^\frac{2}{p+1}}.\] This positive minimizer is a positive solution of (\ref{bound_p_d}) see for instance the book  \cite{STRUWE}.
        For $ p \ge p_s$   $H_0^1(\Omega)$ is no longer compactly imbedded in $L^{p+1}(\Omega)$ and so to find positive solutions of (\ref{bound_p_d}) one needs to take other approachs.   For $ p \ge p_s$ the well known Pohozaev identity \cite{POHO} shows there are no positive solutions of (\ref{bound_p_d}) provided $ \Omega$ is star shaped.    For general domains in the critical/supercritical case, $ p \ge p_s$, the existence versus nonexistence of positive solutions of (\ref{bound_p_d}) is a very delicate question; see \cite{Coron,Passaseo,M_1,M_2,M_3}  and for related problems 
        \cite{addd_100, addd_101, addd_102, addd_103, addd_104}.  
        
There has been much work done on the existence and nonexistence of positive classical solutions of
\begin{equation} \label{lane_class}
-\Delta w = w^p  \qquad \mbox{in $ \IR^N$.}
\end{equation}  As in the bounded domain case the critical exponent $p_s$  plays a crucial role. For $ 1 <p < p_s$ there are no positive classical solutions of (\ref{lane_class}) and for $  p \ge  p_s$ there exist positive classical solutions,  see \cite{Caf,chen,gidas,Gidas}. The moving plane method shows that all positive classical solutions, satisfying certain assumptions, are radial about a point.


\subsection{Outline of the approach}

In Section \ref{radial_sec} we will construct a smooth positive radial solution $ w$ of (\ref{main_eq}) for $ 0<\gamma$ and $ 1<p<p_{N,\gamma}$.   Our approach to obtaining positive bounded solutions to (\ref{eq_zero_pert}) and (\ref{eq_second_pert}) will be to linearize around the radial solution $w$ (see Section \ref{radial_sec}) and hence of crucial importance will be the mapping properties of the linearized operator given by 
\[L(\phi):=\Delta \phi + \gamma \phi_{rr} + p w(r)^{p-1} \phi = L_\gamma(\phi) + p w(r)^{p-1} \phi.\]
 We look for solutions of (\ref{eq_zero_pert}) of the form $ u(x)=w(r)+ \phi(x)$ and then note we need $ \phi$ to satisfy (note we are replacing the term $u^p$ with $|u|^p$ in the equation and we will prove $u$ is positive at a later point; which is standard practice)
\begin{equation}  \label{non_lin_pert1}
 \left\{ \begin{array}{lcl}
\hfill -L(\phi) &=&  \delta g(x) |w+\phi|^p + |w+\phi|^p-w^p - p w^{p-1} \phi \qquad \mbox{ in } B_1 \backslash \{0\},   \\
\hfill  \phi &=& 0 \hfill \mbox{ on }   \partial B_1.
\end{array}\right.
  \end{equation}    We now define the nonlinear mapping $J_\delta$ via $ J_\delta(\phi)=\psi$ where $ \psi$ satisfies 
  \begin{equation}  \label{non_lin_pert1_map}
 \left\{ \begin{array}{lcl}
\hfill -L(\psi) &=&  \delta g(x) |w+\phi|^p + |w+\phi|^p-w^p - p w^{p-1} \phi \qquad \mbox{ in } B_1 \backslash \{0\},   \\
\hfill  \phi &=& 0 \hfill \mbox{ on }   \partial B_1.
\end{array}\right.
  \end{equation} 
  
To obtain a solution $u=w + \phi$ we will apply Banach's fixed point theorem to see that $J_\delta$ has a fixed point $ \phi$ for suitably small $ \delta$.   We will then argue $ u=w+\phi$ is positive and hence satisfies (\ref{eq_zero_pert}).
   See Section \ref{fixed_point_arg} for details of the fixed point argument and for (\ref{eq_second_pert}).

\section{A radial solution} \label{radial_sec}

In this section we construct a positive radial solution of (\ref{main_eq}) for a range of $p$ (which includes a supercritical range).

\begin{thm} \label{rad_sup} (Supercritical radial solutions) \begin{enumerate} 
\item  For $ 0<\gamma<N-2$ and $ 1<p<p_{N,\gamma}:=\frac{N+2+3 \gamma}{N-2-\gamma}$ there is a positive smooth radial decreasing solution $ w=w_\gamma$ of (\ref{main_eq}) in $ B_1$ in $ \IR^N$. 

\item For $ \gamma \ge N-2$ and $ 1<p<\infty$ there  is a positive smooth radial decreasing solution $ w=w_\gamma$ of (\ref{main_eq}) in $ B_1$ in $ \IR^N$.

\end{enumerate} 
\end{thm}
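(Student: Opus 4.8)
The plan is to pass to radial functions, which turns (\ref{main_eq}) into a weighted ordinary differential equation, and then to solve that equation by constrained minimization. First I would record that for a radial $u(x)=w(|x|)$ one has $\sum_{i,j}\frac{x_ix_j}{|x|^2}u_{x_ix_j}=w''(r)$, so (\ref{main_eq}) is equivalent to
\[
-(1+\gamma)\,w''-\frac{N-1}{r}\,w'=w^p\quad\text{on }(0,1),\qquad w(1)=0.
\]
Dividing by $1+\gamma$ and setting $M:=\frac{N+\gamma}{1+\gamma}$ (so that $\frac{N-1}{1+\gamma}=M-1$), this is the radial Lane--Emden equation $-w''-\frac{M-1}{r}w'=\frac{1}{1+\gamma}w^p$ in the (possibly non-integer) effective dimension $M$. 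One checks directly that $M>2\iff\gamma<N-2$, and in that case the Sobolev exponent of dimension $M$ is exactly $\frac{M+2}{M-2}=\frac{N+2+3\gamma}{N-2-\gamma}=p_{N,\gamma}$; when $\gamma\ge N-2$ one has $M\le2$ and there is no Sobolev restriction. Thus the hypothesis $1<p<p_{N,\gamma}$ is precisely subcriticality in dimension $M$.

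Next I would set up the variational problem. Let $\mathcal{H}$ be the space of functions $w$ on $(0,1)$ with $w(1)=0$ and $\|w\|_{\mathcal{H}}^2:=\int_0^1|w'(r)|^2 r^{M-1}\,dr<\infty$, and put
\[
\Lambda:=\inf\Big\{\|w\|_{\mathcal{H}}^2:\ w\in\mathcal{H},\ \int_0^1|w|^{p+1}r^{M-1}\,dr=1\Big\}.
\]
The key analytic input is that the embedding $\mathcal{H}\hookrightarrow L^{p+1}((0,1),r^{M-1}\,dr)$ is compact for $1<p<p_{N,\gamma}$ (and for all $p\in(1,\infty)$ when $\gamma\ge N-2$, where $\mathcal{H}$ embeds into every $L^q$ by a one-dimensional Sobolev/Morrey estimate). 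For $M>2$ this can be derived from the Strauss-type pointwise bound $|w(r)|\le C r^{-(M-2)/2}\|w\|_{\mathcal{H}}$ together with a Hardy inequality, or quoted from the literature on radial elliptic problems with power weights; the sharp cutoff at $p_{N,\gamma}$ requires the sharp weighted Sobolev inequality. Granting compactness, the direct method produces a minimizer, and since $\||w_0|\,'|=|w_0'|$ a.e.\ we may take the minimizer $w_0\ge0$; the Lagrange multiplier equals $\Lambda$, so after the scaling $w:=\big((1+\gamma)\Lambda\big)^{1/(p-1)}w_0$ we obtain a nonnegative, nontrivial weak solution of $-(1+\gamma)w''-\frac{N-1}{r}w'=w^p$ on $(0,1)$ with $w(1)=0$.

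It remains to upgrade $w$ to a classical positive decreasing solution. On $(0,1]$ the weight $r^{M-1}$ is smooth and positive, so ODE regularity gives $w\in C^\infty((0,1])$. Near $r=0$ a De~Giorgi--Moser iteration (using $p+1$ subcritical) gives $w\in L^\infty$; testing the weak equation against functions not vanishing at the origin then forces $\lim_{r\to0^+}r^{M-1}w'(r)=0$, and integrating $(r^{M-1}w')'=-\frac{r^{M-1}}{1+\gamma}w^p$ yields $w'(r)=O(r)$, hence $w\in C^1([0,1])$ with $w'(0)=0$; a routine bootstrap produces an asymptotic expansion of $w$ in even powers of $r$, so $x\mapsto w(|x|)$ is smooth on $B_1$. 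Since $-(1+\gamma)w''-\frac{N-1}{r}w'=w^p\ge0$ and $w\not\equiv0$, the strong maximum principle gives $w>0$ on $(0,1)$, and $w(0)=0$ is excluded by uniqueness of the bounded (``regular'') solution of the singular linear ODE satisfied by $w$; thus $w>0$ on $[0,1)$. Finally $r^{M-1}w'(r)=-\frac{1}{1+\gamma}\int_0^r s^{M-1}w(s)^p\,ds<0$ for $r>0$, so $w$ is strictly decreasing.

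The main obstacle is the compactness of the weighted embedding up to the exponent $p_{N,\gamma}$: this is exactly where the bound on $p$ is consumed, and pinning the sharp exponent at $\frac{M+2}{M-2}$ (rather than the weaker value coming from the naive Strauss estimate) needs the sharp weighted Sobolev inequality or an appeal to known radial results. An alternative route avoiding the variational machinery is a shooting argument: solve the singular initial value problem with $w(0)=1$, $w'(0)=0$, show $w$ is decreasing as long as it is positive, and show it must reach $0$ at a finite radius $R$ — this last step again uses $p<p_{N,\gamma}$, since otherwise $w$ would be a radial ground state of $-w''-\frac{M-1}{r}w'=\frac{1}{1+\gamma}w^p$ on $(0,\infty)$, which a Pohozaev identity for the ODE rules out in the subcritical range — and then rescale so that $R=1$.
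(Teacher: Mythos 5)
Your proposal is correct and follows essentially the same route as the paper: both reduce the radial problem to the Lane--Emden equation $-\Delta_{M}w=\frac{w^p}{1+\gamma}$ in the effective (generally non-integer) dimension $M=N_\gamma=\frac{N+\gamma}{1+\gamma}$, observe that $M>2\iff\gamma<N-2$ and that $\frac{M+2}{M-2}=p_{N,\gamma}$, and invoke subcriticality to produce a positive smooth radial decreasing solution. The only difference is that you carry out the weighted variational existence/regularity details explicitly (which the paper leaves to standard subcritical theory), and in fact the Strauss-type bound already yields the compact embedding up to the sharp exponent on the bounded interval, so no sharp weighted Sobolev inequality is needed.
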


\begin{proof} Note $ w=w(r)$ is a solution of (\ref{main_eq}) in $B_1$ provided
  \[ - w''(r) - \frac{N-1}{r} w'(r) - \gamma w''(r) = w(r)^p, \quad 0<r<1\]  with $ w(1)=0$.  Note we can re-write this as 
  
  \[ -w_{rr} - \frac{N-1}{1+\gamma} \frac{w_r}{r} = \frac{ w^p }{1+\gamma} \] and note if we set $N_\gamma$ by 
\[  N_\gamma-1 = \frac{N-1}{1+\gamma} \]  then we can view the above problem as  
\begin{equation} \label{shit}
-\Delta_{N_\gamma} w = \frac{w^p}{1+\gamma} \qquad B_1 \subset \IR^{N_\gamma}
\end{equation}
with $ w=0$ on $ \partial B_1$ where $ \Delta_{N_\gamma}$ is the radial Laplacian in dimension $N_\gamma$.     

We now consider the two cases seperately.  Firstly we assume $ 0< \gamma < N-2$.  So if
\[ 1<p< \frac{N_\gamma+2}{N_\gamma-2} \] then the problem is subcritical and we can find a positive smooth radial solution.  A computation shows 
\[ \frac{N_\gamma+2}{N_\gamma -2} = \frac{N+2+3 \gamma}{N-2-\gamma}.\]  Note for $ \gamma>0$ this gives a supercritical range of $ p$.   We now consider the case of $ \gamma \ge N-2$.   Return to (\ref{shit}) and note $N_\gamma= \frac{N+\gamma}{1+\gamma}$.    A computation shows that $N_\gamma \le 2$ exactly when $ \gamma \ge N-2$ and hence we see (\ref{shit}) is subcritical in the case of $\gamma \ge N-2$ and hence we can find a positive smooth radial decreasing solution of (\ref{shit}) for any $ 1<p<\infty$.

\end{proof}

\subsubsection{Nondegeneracy of the radial solution.}

It is well known that the positive radial solution of the subcritical problem $ -\Delta u = u^p$ in $B_1$ with $u=0$ on $ \partial B_1$ is nondegenerate in the sense that the linearized operator $ \phi \mapsto \Delta \phi + p u(r)^{p-1} \phi$ has a trivial kernel in $H_0^1(B_1)$ (for instance); see \cite{Grossi_Pacella, Korman, Lin_100}.  This proof can be extended to show that the solution $w$, constructed in Theorem   \ref{rad_sup} is nondegenerate in $ H_{0,rad}^1(B_1 \subset \IR^{N_\gamma})$, of course it does not extend to show the full nondegeneracy of the solution $w$.

 We now state our kernel result.   The exact function space setting will vary depending on which situation we are in.   Essentially we want to cover all the cases from Theorem \ref{main_linear}.

\begin{prop} \label{kern_L}  Let $ p,\gamma,N$ be from the  hypothesis of Theorem \ref{rad_sup} and let $w$ be the smooth positive solution promised.  We now restrict $\sigma$ as in the various cases of Theorem \ref{main_linear}.  Set \begin{equation} \label{linear_L}
L(\phi):=\Delta \phi + \gamma \phi_{rr} + p w(r)^{p-1} \phi = L_\gamma(\phi) + p w(r)^{p-1} \phi.
\end{equation}   Suppose $ \phi \in X$ (or $ X_1$ as in the final case) such that $L(\phi)=0$ in $B_1 \backslash \{0\}$.  Then $ \phi=0$. 
\end{prop}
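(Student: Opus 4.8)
The plan is to decompose $\phi$ into spherical harmonics, $\phi(r\theta) = \sum_{k \ge 0} \phi_k(r)\psi_k(\theta)$, and to show that each radial coefficient $\phi_k$ must vanish. Plugging this into $L(\phi) = 0$ and using $-\Delta_\theta \psi_k = \lambda_k \psi_k$ together with $L_\gamma(\phi) = \Delta\phi + \gamma\phi_{rr}$, each mode $\phi_k = \phi_k(r)$ satisfies an ODE of the form
\begin{equation*}
(1+\gamma)\phi_k'' + \frac{N-1}{r}\phi_k' - \frac{\lambda_k}{r^2}\phi_k + p\, w(r)^{p-1}\phi_k = 0, \qquad 0 < r < 1,
\end{equation*}
with $\phi_k(1) = 0$ (from the boundary condition on $\partial B_1$) and with a growth/integrability restriction near $r = 0$ coming from membership in $X$ (or $X_1$): the $X$-norm forces $\phi_k(r) = O(r^{-\sigma})$ together with the corresponding derivative bounds, and in the $X_1$ case only $k \ge 1$ modes are present.

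First I would handle the $k = 0$ mode (present only in cases 1 and 2 of Theorem \ref{main_linear}). Here $\phi_0$ solves $(1+\gamma)\phi_0'' + \frac{N-1}{r}\phi_0' + p w^{p-1}\phi_0 = 0$, which after dividing by $1+\gamma$ is exactly the linearization of $-\Delta_{N_\gamma} w = w^p/(1+\gamma)$ in the fictitious dimension $N_\gamma$. The nondegeneracy of $w$ in $H^1_{0,rad}(B_1 \subset \IR^{N_\gamma})$ quoted just before the proposition kills any solution that is regular enough at the origin; I would check that the admissible decay rate $r^{-\sigma}$ in the allowed $\sigma$-range (and the integrability weights defining $X$) is compatible with the finite-energy class in dimension $N_\gamma$, so that the $X$-bound indeed forces $\phi_0 \in H^1_{0,rad}$, hence $\phi_0 \equiv 0$. (Alternatively, a direct ODE argument: the two linearly independent solutions near $0$ behave like $1$ and $r^{2-N_\gamma}$; the $X$-norm excludes the singular one, and then a Wronskian/shooting argument using $w' < 0$ and the boundary condition rules out the regular one.)

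Next, the higher modes $k \ge 1$. The key point is that the potential term $p w^{p-1} > 0$ has a definite sign, so I would argue by a maximum-principle / first-eigenvalue comparison. The operator $\phi_k \mapsto (1+\gamma)\phi_k'' + \frac{N-1}{r}\phi_k' - \frac{\lambda_k}{r^2}\phi_k$ is, up to the constant $1+\gamma$, the radial part of $\Delta_{N_\gamma}$ acting on the $\lambda_k$-mode; since $\lambda_k \ge \lambda_1 = N-1$ for $k \ge 1$, the full mode operator (for $\phi$, not $\phi_k$) is $\Delta + \gamma\,\partial_{rr}$ restricted to spherical harmonics of degree $\ge 1$, and one knows for the ordinary Laplacian that $\Delta + pw^{p-1}$ is negative definite on such modes because the first radial Dirichlet eigenfunction of $\Delta + pw^{p-1}$ in dimension $N_\gamma$ is $w' < 0$ with eigenvalue related to the $\lambda_1$-threshold — this is the standard Grossi–Pacella–Korman–Lin argument, and I would import it. Concretely: if $\phi_k \not\equiv 0$ were an admissible solution, testing the ODE against $\phi_k$ and integrating by parts (the $X$-weights guarantee all boundary terms at $r=0$ vanish) gives $(1+\gamma)\int_0^1 r^{N-1}|\phi_k'|^2 + \lambda_k\int_0^1 r^{N-3}|\phi_k|^2 = p\int_0^1 r^{N-1} w^{p-1}|\phi_k|^2$, and comparing with the analogous identity for $w'$ (which saturates this with $\lambda_1$) yields a contradiction unless $\phi_k \equiv 0$, using $\lambda_k \ge \lambda_1$ and strict inequality for $k \ge 2$, plus a separate argument (translation invariance of $-\Delta w = w^p$ broken by the ball; or direct ODE uniqueness) for $k = 1$.

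The main obstacle I anticipate is the interface between the weighted spaces $X$, $X_1$ and the ODE analysis at the origin: I must verify that the restriction on $\sigma$ in each of the three cases of Theorem \ref{main_linear} is exactly what is needed to exclude the "bad" indicial root of each mode ODE at $r = 0$ (the singular behaviour $r^{\mu_k^-}$ where $\mu_k^\pm$ solve $(1+\gamma)\mu(\mu-1) + (N-1)\mu - \lambda_k = 0$), while still admitting the radial solution $w'$-type comparison function; the inequality (\ref{1mode}) is visibly the $k=1$ indicial-root condition, which strongly suggests the proof is precisely this indicial-root bookkeeping combined with the sign of $pw^{p-1}$. The remaining routine work is checking the integration-by-parts boundary terms and assembling the modewise conclusions into $\phi \equiv 0$ via Parseval.
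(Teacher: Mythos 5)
Your overall strategy (spherical--harmonic decomposition, mode ODEs, comparison with $w'$, indicial--root bookkeeping in the weighted spaces) matches the paper's, but the way you close the modes $k\ge 1$ has a genuine gap. You propose to test the mode ODE against $\phi_k$ itself and compare the resulting quadratic form with ``the analogous identity for $w'$, which saturates with $\lambda_1$.'' Two problems. First, the correct self-adjoint weight for the operator $(1+\gamma)\partial_{rr}+\frac{N-1}{r}\partial_r$ is $r^{N_\gamma-1}$ with $N_\gamma-1=\frac{N-1}{1+\gamma}$, not $r^{N-1}$; more seriously, membership in $X$ only gives $|\phi_k(r)|\lesssim r^{-\sigma}$, $|\phi_k'(r)|\lesssim r^{-\sigma-1}$, and in case 1 the admissible range $0<\sigma<\frac{N-2-\gamma}{1+\gamma}$ can exceed $\frac{N_\gamma-1}{2}=\frac{N-1}{2(1+\gamma)}$ (exactly when $2\gamma<N-3$), so the integrals $\int_0^1 r^{N_\gamma-1}|\phi_k'|^2\,dr$ and $\int_0^1 r^{N_\gamma-3}\phi_k^2\,dr$ are not a priori finite: your claim that ``the $X$-weights guarantee all boundary terms at $r=0$ vanish'' is unjustified without a preliminary asymptotic refinement (the indicial bootstrap you allude to at the end but do not carry out). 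Second, $w'$ does not vanish at $r=1$, so it does not saturate any Dirichlet eigenvalue identity; the information in $w'$ must enter through the boundary term it creates. The paper's argument does precisely this: with $v=w'$ and $T$ the first zero of $a_k$ (WLOG $a_k>0$ on $(0,T)$), one cross-multiplies the two equations and integrates over $B_T\setminus B_\varepsilon$ in dimension $N_\gamma$; the boundary terms at $\varepsilon$ are $O(\varepsilon^{N_\gamma-1-\sigma})\to 0$ (this is where the $\sigma$-restrictions enter, and $\sigma<N_\gamma-1$ holds in all three cases, only pointwise bounds being needed), and the limiting identity $\frac{\lambda_k-(N-1)}{1+\gamma}\int_{B_T}\frac{v\,a_k}{r^2}\,dx=a_k'(T)\,v(T)\,|\partial B_T|$ has left side $\le 0$ and right side $>0$ by Hopf's lemma. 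Note this settles $k=1$ at the same stroke ($\lambda_1=N-1$ makes the left side vanish); your proposed ``separate argument for $k=1$'' via translation invariance or ODE uniqueness is not a proof --- the translation mode $w'$ fails the boundary condition precisely because $w'(1)\neq 0$, and that fact has to be used quantitatively, as above.

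For $k=0$ there are related issues. Your main route (show $\phi_0\in H^1_{0,rad}(B_1\subset\IR^{N_\gamma})$ and invoke the quoted nondegeneracy) runs into the same integrability obstruction in case 1, and your indicial bookkeeping is off in case 2: there $2-N_\gamma>0$, so both branches are bounded and the $X$-constraint (with $\sigma<0$) excludes the branch with a nonzero limit at the origin, not a ``singular'' one. The paper instead works directly with a possibly singular $a_0$: it tests against $\delta(r)=r\,w'(r)$, which satisfies $-\Delta_{N_\gamma}\delta=\frac{p\,w^{p-1}\delta}{1+\gamma}+\frac{2w^p}{1+\gamma}$, integrates over $\{\varepsilon<|x|<1\}$, and derives the two incompatible facts $\int_{B_1}w^p a_0\neq 0$ (Hopf at $r=1$) and $\int_{B_1}w^p a_0=0$ (testing against $w$), with the $\varepsilon$-boundary terms controlled exactly as in the $k\ge 1$ case. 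So while your plan is in the right spirit, as written the key integral identities are not justified on the full admissible $\sigma$-range, and the $k=1$ (and Hopf-at-$T$) mechanism that actually closes the argument is missing.
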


\begin{proof}  We write $ \phi(x)=\sum_{k=0}^\infty a_k(r) \psi_k(\theta)$ and then we have for all $k \ge 0$
\begin{equation} \label{ode_kern}
(1+\gamma) a_k''(r) + \frac{N-1}{r} a_k'(r) - \frac{\lambda_k a_k(r)}{r^2} + p w(r)^{p-1} a_k(r)=0 \quad 0<r<1,
\end{equation} with $ a_k(1)=0$.  
From the comments in the paragraph proceeding the theorem, we have  $ a_0=0$; the only possible issues are related to how singular $ a_0$ is.  We cover the case of $k=0$ later.  We now suppose $ k\ge 1$ and let $ v(r):=w_r(r)$ and note $ v<0$ and satisfies 
\[ 0=\Delta_{N_\gamma} v(r) + \frac{ p w(r)^{p-1} v}{1+\gamma} - \frac{(N-1)}{(1+\gamma)r^2} v \quad \mbox{ in } B_1 \backslash \{0\} \subset \IR^{N_\gamma} .\]  Also note that we can re-write the equation for $a_k$ as
\[ 0 = \Delta_{N_\gamma} a_k + \frac{p w(r)^{p-1} a_k}{1+\gamma}- \frac{\lambda_k a_k}{r^2(1+\gamma)} \quad B_1 \backslash \{0\} \subset \IR^{N_\gamma}, \] with $ a_k=0$ on $ \partial B_1$.  We now suppose $a_k \neq 0$ and we let $ T \in (0,1]$ denote the first positive $T$ such that $ a_k(T)=0$.    By multiplying $a_k$ by a constant we can assume $a_k>0$ in $B_T \subset \IR^{N_\gamma}$.    We now multiply the equation for $v$ by $ a_k$ and the equation for $a_k$ by $ v$ and integrate over $B_T \backslash B_\E \subset R^{N_\gamma}$ (where $ \E>0$ is small compared to $T$) to arrive at 
\begin{eqnarray*}
\frac{(\lambda_k -(N-1)}{\gamma+1} \int_{B_T \backslash B_\E} \frac{v a_k}{r^2} dx &= & a_k'(T) v(T) | \partial B_T|_{\IR^{N_\gamma}} + I_\E - J_\E
\end{eqnarray*} where $ | \partial B_T|_{\IR^{N_\gamma}}$ means the surface area  of the boundary of $B_T$ in $ \IR^{N_\gamma}$; ie. is equal to $ C_{N,\gamma} |T|^{N_\gamma-1}$ where $C_{N,\gamma}>0$ is a constant and where $ I_\E,J_\E$ are some surface integrals coming from the integration by parts.  These terms are equal to 
\[ I_\E=  C_{N,\gamma}v'(\E) a_k(\E) |\E|^{N_\gamma-1}, \quad J_\E=C_{N,\gamma}v(\E) a_k'(\E) |\E|^{N_\gamma-1}.
\]  Lets assume we can show that $I_\E,J_\E \rightarrow 0$ as $ \E \searrow 0$.     Then we would have 
\begin{equation} \label{kernel_100}\frac{(\lambda_k -(N-1)}{\gamma+1} \int_{B_T} \frac{v a_k}{r^2} dx = a_k'(T) v(T) | \partial B_T|_{\IR^{N_\gamma}}.
\end{equation} Now note that  $v<0$ in $B_T$ and $ \lambda_k-(N-1) \ge 0$ and hence the left hand side is less or equal zero.  By Hopf's Lemma we have $ a_k'(T)<0$ and hence the right hand side is positive;  this gives us the needed contradiction. 
Note that since $ \phi \in X$ one can show (here we are using assumption that $t>N$) to see that there is some $C>0$ such that $ |x|^{\sigma} | \phi(x)| + |x|^{\sigma+1}| \nabla \phi(x)| \le C$.    From this we see for each $k \ge 0$ we have $ |x|^\sigma |a_k(r)| + |x|^{\sigma+1} |a_k'(r)| \le C_k$ for all $ 0<|x| \le 1$.     One can easily show the following bounds on $ w(r)$;  $ |w'(r)| \le C r$ and $| w''(r)| \le C$. Using these estimates we see that 
\[ |I_\E| + |J_\E| \le C \E^{N_\gamma-1-\sigma},\] and hence we have the desired provided $ N_\gamma-1-\sigma=\frac{N-1}{1+\gamma}-\sigma>0$. \\

We now consider the various cases.  In the first case we have $0<\gamma<N-2$ and   $ 0<\sigma< \frac{N-2-\gamma}{1+\gamma}$ and hence we have $N_\gamma-1-\sigma>0$.   We now consider the second case where $ \gamma>N-2$ and $ \frac{N-2-\gamma}{1+\gamma}<\sigma<0$.  Note in this case that since $\sigma<0$ we trivially have the desired result.    The final case follows the same idea as case 2 since $ \sigma$ is negative.    \\

We now consider the case of $k=0$.   Here we follow the approach of \cite{Grossi_Pacella, Lin_100, Korman}.  Set $ \delta(r):=r w'(r)$ which is negative for $0<r \le 1$. A computation shows that 
\[ -\Delta_{N_\gamma} \delta(r) = \frac{p w(r)^{p-1} \delta(r)}{1+\gamma} + \frac{2 w(r)^p}{1+\gamma}, \quad \mbox{ in }   B_1 \backslash \{0\} \subset \IR^{N_\gamma}.\]    Multiply this equation by $a_0$ (which, towards a contradiction, we are assuming is not identically zero) and integrate over $ \{x: \E<|x|<1\}$ and use integration by parts and the equation for $a_0$ to arrive  at \
\[ \frac{2}{1+\gamma} \int_{ \{\E<|x|<1\} } w^p a_0 = \int_{\partial B_1} \delta \partial_\nu a_0 + I_\E - J_\E,\]  where 
\[ I_\E:= \int_{\partial B_\E} a_0 \partial_\nu \delta, \quad J_\E:=\int_{\partial B_\E} \delta \partial_\nu a_0,\] where (as above) we are in the possibly fractional dimension $N_\gamma$.   Lets assume $I_\E,J_\E \rightarrow 0$ as $ \E \searrow 0$.  Then we have 
\[ \frac{2}{1+\gamma} \int_{ B_1} w^p a_0 = \int_{\partial B_1} \delta \partial_\nu a_0,\] and by Hopf's lemma we have $ \partial_\nu a_0 = C \neq 0$ on $ \partial B_1$ and hence we have $ \int_{B_1} w^p a_0 \neq 0$.   By multiplying the equation for $a_0$ by $w$ and the equation for $w$ by $a_0$ (and taking a bit of care near the origin) we arrive at $  \int_{B_1} w^p a_0=0$; which gives us the desired contradiction.    To show $ I_\E,J_\E \rightarrow 0$  one using essentially the same argument as for $k \ge 1$. 
 
\end{proof}

\section{The linear theory}

\subsection{The Cordes operator $L_\gamma$}

\begin{lemma} \label{first_lemma}  Under the hypothesis of Theorem \ref{main_linear} the kernel of $L_\gamma$ is trivial and for all $ k \ge 0$ there is some $C_k>0$  such that for all $ f(x)= b_k(r) \psi_k(\theta)$ there is some $ \phi(x)=a_k(r) \psi_k(\theta)$ such that  $ \phi,f$ solve (\ref{linear_100}) and $ \| \phi \|_X \le C_k \|f\|_Y$.   The above results hold for case 1 and case 2.  For case 3 the result holds for all $k \ge 1$. 

\end{lemma}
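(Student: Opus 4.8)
The plan is to decompose into spherical harmonics and reduce $(\ref{linear_100})$ to a one-dimensional two-point boundary value problem for each mode, which is then solved explicitly by a Green's function. Writing $f(x)=b_k(r)\psi_k(\theta)$ and seeking $\phi(x)=a_k(r)\psi_k(\theta)$, the equation $L_\gamma(\phi)=f$ becomes
\begin{equation*}
(1+\gamma)a_k''+\frac{N-1}{r}a_k'-\frac{\lambda_k}{r^2}a_k=b_k(r),\qquad 0<r<1,\quad a_k(1)=0,
\end{equation*}
which, after dividing by $1+\gamma$, is the radial equation for $\Delta_{N_\gamma}-\frac{\lambda_k}{(1+\gamma)r^2}$ in the possibly fractional dimension $N_\gamma=\frac{N+\gamma}{1+\gamma}$. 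It is of Euler type at $r=0$, with indicial roots
\begin{equation*}
\alpha_k^\pm=\frac{-(N-2-\gamma)\pm\sqrt{(N-2-\gamma)^2+4(1+\gamma)\lambda_k}}{2(1+\gamma)},
\end{equation*}
which are real and distinct under the hypotheses (the discriminant is positive for every $k\ge1$, and also for $k=0$ since $\gamma\ne N-2$ in cases 1 and 2), and which satisfy $\alpha_k^++\alpha_k^-=2-N_\gamma$. Two elementary facts organize the proof. First, $r^{\alpha_k^+}-r^{\alpha_k^-}$ is, up to a constant multiple, the unique homogeneous solution vanishing at $r=1$, and near the origin it is comparable to $r^{\alpha_k^-}$, the smaller exponent. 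Second, $r^\alpha\psi_k$ meets the weighted bounds defining $X$ precisely when $\alpha\ge-\sigma$ (leaving aside the boundary condition at $r=1$), while $b_k\psi_k\in Y$ forces, in an $L^t$-averaged sense over dyadic annuli, decay like $\rho^{-\sigma-2}$. A direct check shows that the restrictions on $\sigma$ in Theorem $\ref{main_linear}$ place $-\sigma$ strictly between two consecutive indicial roots: in cases 1 and 2 one has $\alpha_k^-<-\sigma<\alpha_k^+$ for all $k\ge0$, the binding constraint being $k=0$ (with $\alpha_0^+=0$ and $\alpha_0^-=-\frac{N-2-\gamma}{1+\gamma}$ in case 1, and $\alpha_0^+=\frac{\gamma-(N-2)}{1+\gamma}$ and $\alpha_0^-=0$ in case 2); in case 3 the conditions $\sigma<0$ and $(\ref{1mode})$ — which in particular force $\sigma>-1$ (note $\alpha_1^+=1$) — give $\alpha_k^-<-\sigma<\alpha_k^+$ for every $k\ge1$, whereas for $k=0$ both indicial roots lie strictly below $-\sigma$, and this is exactly why the third case is stated on $X_1,Y_1$.

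Triviality of the kernel is then immediate. If $\phi=\sum_k a_k(r)\psi_k(\theta)$ lies in $X$ (or $X_1$) and $L_\gamma(\phi)=0$ in $B_1\setminus\{0\}$ with $\phi=0$ on $\partial B_1$, each $a_k$ solves the homogeneous ODE with $a_k(1)=0$, hence $a_k=c_k(r^{\alpha_k^+}-r^{\alpha_k^-})$; since this behaves like $r^{\alpha_k^-}$ near the origin and $\alpha_k^-<-\sigma$ for every relevant $k$, membership in $X$ forces $c_k=0$, so $\phi=0$.

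For existence with the estimate, fix a relevant mode $k$ and set $u_0(r)=r^{\alpha_k^+}$ (the indicial solution admissible at $r=0$) and $u_1(r)=r^{\alpha_k^+}-r^{\alpha_k^-}$ (the solution vanishing at $r=1$). By Abel's identity the product $c_k:=\rho^{N_\gamma-1}W[u_0,u_1](\rho)$ is a nonzero constant, so the standard Green's-function formula gives the solution
\begin{equation*}
a_k(r)=\frac{1}{(1+\gamma)c_k}\int_0^1 u_0(\min(r,\rho))\,u_1(\max(r,\rho))\,\rho^{N_\gamma-1}b_k(\rho)\,d\rho .
\end{equation*}
Splitting $\int_0^1=\int_0^r+\int_r^1$, using $\alpha_k^++\alpha_k^-=2-N_\gamma$, and running each piece through H\"older's inequality on dyadic shells against the $L^t$-averaged $Y$-bound for $b_k$, one obtains $|a_k(r)|+r|a_k'(r)|\le C_k\,r^{-\sigma}\,\|f\|_Y$; the $\sigma$-restrictions above are exactly what make the resulting exponents line up so that both endpoints of the integral converge (and no logarithms appear). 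The second derivative is then controlled for free from the equation itself, $a_k''=\frac{1}{1+\gamma}\bigl(b_k-\frac{N-1}{r}a_k'+\frac{\lambda_k}{r^2}a_k\bigr)$, so that $r^2|a_k''|\lesssim r^2|b_k|+r|a_k'|+|a_k|$. Since the factor $\psi_k$ contributes only $k$-dependent constants (through $\|\psi_k\|_{L^t(S^{N-1})}$ and the corresponding norms of $\nabla_\theta\psi_k$ and $D^2_\theta\psi_k$), reassembling these bounds shows $\phi=a_k\psi_k\in X$ with $\|\phi\|_X\le C_k\|f\|_Y$, and this $\phi$ solves $(\ref{linear_100})$.

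The step that requires genuine work is the last one: because the norm on $Y$ is a Morrey-type averaged norm rather than a pointwise bound, the Green's-function estimates cannot be carried out by plain pointwise substitution but must be passed through H\"older's inequality on dyadic annuli, and the bookkeeping of the convergence exponents at $r=0$ — where the precise $\sigma$-intervals of Theorem $\ref{main_linear}$ enter — has to be done carefully on each of the ranges $\rho<r/2$, $r/2<\rho<2r$, $\rho>2r$. Controlling the $D^2\phi$ contribution to the $X$-norm is, by contrast, essentially free once $a_k$ and $a_k'$ are estimated, which is precisely why the norm on $X$ is tuned to the second order of $L_\gamma$.
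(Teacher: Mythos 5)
Your proposal is correct and follows essentially the same route as the paper: spherical-harmonic reduction to an Euler-type ODE per mode, triviality of the kernel from $\beta_k^-<-\sigma$, an explicit solution formula (your Green's function with $u_0(\min)u_1(\max)$ is exactly the paper's variation-of-parameters representation with its particular choice of constants enforcing $a_k(1)=0$), dyadic-annuli H\"older estimates against the Morrey-type $Y$-norm under the same exponent conditions $\beta_k^-+\sigma<0<\beta_k^++\sigma$, and recovery of the second-derivative part of the $X$-norm from the equation itself. The identification of the binding modes ($k=0$ in cases 1 and 2, $k=1$ in case 3, with the $k=0$ failure explaining the restriction to $X_1,Y_1$) also matches the paper.
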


\begin{proof} Firstly its clear that $L_\gamma:X \rightarrow Y$ is continuous and into $Y$ in both case 1 and case 2.  So we begin by showing the kernel of $L_\gamma$ is trivial. Suppose $ \phi(x)=\sum_{k=0}^\infty a_k(r) \psi_k(\theta)$ is in the kernel.  Then we have 
\[ (1+\gamma) a_k''(r) + \frac{(N-1) a_k'(r)}{r} - \frac{ \lambda_k a_k(r)}{r^2}=0 \quad 0<r<1\] with $ a_k(1)=0$.    Noting the equation is of Euler type we see the solutions are given via $ a_k(r)= C_k ( r^{\beta_k^+}- r^{\beta_k^-})$ where $ \beta_k^{\pm}$ is defined by the roots of 
\[ (1+\gamma) \beta^2 + (N-2-\gamma) \beta -\lambda_k=0,\] and hence is given by 
\[ \beta_k^{\pm}:= \frac{-(N-2-\gamma)}{2(1+\gamma)} \pm  \frac{ \sqrt{ (N-2-\gamma)^2 + 4 (1+\gamma) \lambda_k}}{2(1+\gamma)}.\] 

In both case 1 and 2 note that if $ \beta_k^- < -\sigma$ then $ C_k (r^{\beta_k^+}-r^{\beta_k^-}) $ is not an element of $X$ unless $ C_k=0$.   By monotonicity in $k$ it is sufficient that $ \beta_0^- < -\sigma$.  Note in case 1 this is exactly the condition that  $ 0<\sigma<\frac{N-2-\gamma}{1+\gamma}$.  In case 2 we want $ \beta_0^-<-\sigma$ and this is just the condition that $ \sigma<0$.   Further restrictions on $ \sigma$ will come later.

    We now prove the desired onto estimate for each mode $k \ge 0$.   
For each $k \ge 0$ consider 
\[ (\gamma+1) a_k''(r) + \frac{(N-1)  a_k'(r)}{r} - \frac{\lambda_k a_k(r)}{r^2} = b_k(r) \quad 0<r<1\] with $ a_k(1)=0$.  
Using the variation of parameters method we obtain solutions of the form 
 
\[ (\gamma+1)(\beta_k^-- \beta_k^+) a_k(r) = r^{\beta_k^-} \int_{T_2}^r \frac{ b_k(\tau)}{\tau^{\beta_k^- -1}} d\tau - r^{\beta_k^+} \int_{T_1}^r \frac{b_k(\tau)}{\tau^{\beta_k^+-1}} d\tau + C_k r^{\beta_k^+}+D_k r^{\beta_k^-},
\] where $C_k,D_k$ are free parameters (depending on $k$ and $b_k$) and we are free to choose $ T_i$ suitably; we need to pick these parameters such that we get the desired estimate on $a_k$ and such that $a_k(1)=0$. 
We will choose $ T_2=0$, $ T_1=1$, $ D_k=0$ and we leave  $C_k=C_k(b_k)$ free for now and hence we get 

\[ (\gamma+1)(\beta_k^-- \beta_k^+) a_k(r) = r^{\beta_k^-} \int_{0}^r \frac{ b_k(\tau)}{\tau^{\beta_k^- -1}} d\tau - r^{\beta_k^+} \int_{1}^r \frac{b_k(\tau)}{\tau^{\beta_k^+-1}} d\tau + C_k r^{\beta_k^+},
\] and note this is an acceptable  choice of $ T_2$ provided  $\frac{b_k(t)}{t^{\beta_k^--1}} \in L^1(0,1)$, which we assume for now. 
For simplicity we normalize $ \|b_k \psi_k \|_Y \le 1$ and hence there is some $\tilde{C}_k$ such that 
\begin{equation} \label{est_111} \int_s^{2s} |b_k(\tau)|^t d \tau \le \tilde{C}_k s^{1-t(2+\sigma)} \quad 0<s \le \frac{1}{2}.
\end{equation} 
We now prove that $ \frac{b_k(\tau)}{\tau^{\beta_k^--1}} \in L^1(0,1)$.

\begin{eqnarray*}
\int_0^1 \frac{ |b_k(\tau)|}{ \tau^{\beta_k^- -1}} d \tau & \le  & C_k \sum_{i=0}^\infty  2^{i(\beta_k^--1)} \int_{2^{-i-1}}^{2^{-i}}  |b_k(\tau)|  d \tau \\
& \le & C_{k,1}  \sum_{i=0}^\infty  2^{i(\beta_k^--1)} \left(\int_{2^{-i-1}}^{2^{-i}}  |b_k(\tau)|^t  d \tau \right)^\frac{1}{t}  2^{\frac{-i}{t'}} \\
& \le & C_{k,1}  \sum_{i=0}^\infty  2^{i(\beta_k^--1)} 2^{i(2+\sigma-\frac{1}{t}) } 2^{\frac{-i}{t'}} \\
&=& C_{k,1} \sum_{i=0}^\infty  2^{i ( \beta_k^--1+2 + \sigma - \frac{1}{t} - \frac{1}{t'})}
\end{eqnarray*} and note the exponent is simplifies to $ \beta_k^- + \sigma$.   So provided $ \beta_k^- + \sigma<0$  then the sum converges and we get the desired result.  By the mononocity in $ k$ its sufficient to consider the case of $k=0$,  ie.  we want $ \beta_0^-+\sigma<0$.    We first consider case 1 and in this case this restriction is exactly the assumption that $ 0<\sigma< \frac{N-2-\gamma}{1+\gamma}$.   In case 2 we have $\beta_0^-=0$ and hence the restriction just becomes that $ \sigma <0$ (in case 2 there will be further restrictions on $\sigma$ later).     We now consider the various terms in the formula for $a_k$.  \\

We first examine the term 
\[r^{\beta_k^-} \int_{0}^r \frac{ b_k(\tau)}{\tau^{\beta_k^- -1}} d\tau  + C_k r^{\beta_k^+}\] and we choose 
\[ C_k:=- \int_0^1 \frac{ b_k(\tau)}{\tau^{\beta_k^- -1}} d\tau.\]   Note with this choice of $C_k$ we have the needed zero boundary condition for this term (and its clear the other term has the needed boundary condition)  hence $ a_k(1)=0$.   We now get the estimate.    Firstly we will need the term $ r^{\beta_k^+} \in X$.  In case 1 this will require that $\beta_k^+ \ge -\sigma$ and by monotonicy in $k$ its sufficient that $\beta_0^+ \ge -\sigma$  but this holds since $\beta_0^+=0$ and $ \sigma>0$.   In case 2 we again will need $ \beta_0^+ \ge -\sigma$ and writing this out gives $ 0> \sigma \ge \frac{N-2-\gamma}{1+\gamma}$.  So in both cases we have $ r^{\beta_k^+} \in X$. 
Now note by the previous argument to show the needed integrand is $L^1(0,1)$ we have $ |C_k| $ is bounded by a constant depending just on $k$ and hence in both case 1 and 2 we have $ \| C_k r^{\beta_k^+} \psi_k \|_X$ is bounded by a constant just depending on $k$.  We now need to examine the integral term and the computation is very similar to when showing the previous integrand was $L^1(0,1)$.   A computation shows 

\begin{eqnarray*}
\int_0^r \frac{ |b_k(\tau)|}{\tau^{\beta_k^--1}} d \tau & \le C_k &\sum_{i=0}^\infty (r 2^{-i})^{1-\beta_k^-}  \int_{r 2^{-i-1}}^{r 2^{-i}} | b_k(\tau)| d \tau \\
& \le & C_k \tilde{C}_k \sum_{i=0}^\infty (r 2^{-i})^{1-\beta_k^- + \frac{1}{t'}} \left(  \int_{r 2^{-i-1}}^{r 2^{-i}} | b_k(\tau)|^t d \tau \right)^\frac{1}{t}\\
& \le & C_{k,1} \sum_{i=0}^\infty (r 2^{-i})^{1-\beta_k^- + \frac{1}{t'} + \frac{1}{t}-2-\sigma} \\
&=& r^{-\beta_k^--\sigma} C_{k,1} \sum_{i=0}^\infty \frac{1}{  (2^{-\beta_k^--\sigma})^i}. 
\end{eqnarray*} 
Note in both case 1 and 2 we have $ -\beta_k^--\sigma>0$ and hence the infinite sum converges.   From this we see in either case we have 
\[ r^{\beta_k^-+\sigma} \int_0^r \frac{ |b_k(\tau)|}{\tau^{\beta_k^--1}} d \tau \le D_k\] and this gives us the needed zero order estimate on one of the integral terms.  \\

We now consider the other integral term namely 
\[r^{\beta_k^+} \int_{1}^r \frac{b_k(\tau)}{\tau^{\beta_k^+-1}} d\tau=: r^{\beta_k^+} g_k(r).\] Note that  we can write (for integers $n \ge 1$)
\[ g_k(2^{-n})= \sum_{i=1}^n ( g_k(2^{-i})-g_k(2^{-i+1})) \; \; \mbox{ and hence } \; \;  | g_k(2^{-n})| \le  \sum_{i=1}^n | g_k(2^{-i})-g_k(2^{-i+1})|.  \]   A computation similar to the previous one shows 
\begin{eqnarray*}
| g_k(2^{-n})| & \le & \sum_{i=0}^n \int_{ 2^{-i}}^{2^{1-i}} \frac{ |b_k(\tau)|}{\tau^{\beta_k^+-1}} d \tau \\
& \le & C_k \sum_{i=0}^n 2^{i(\beta_k^+ +\sigma)} \\
&=& C_k \frac{2^{ (\beta_k^++\sigma)(n+1)-1}}{2^{\beta_k^+ +\sigma}-1}
\end{eqnarray*} and from this we see 
\[  ( 2^{-n})^{\beta_k^+ + \sigma}| g_k(2^{-n})| \le \tilde{D}_k\] for all $n \ge 1$.   This gives us the desired zero order estimate at least for the values of $ r \in \{ 2^{-n}: n \ge 1 \mbox{ an integer} \}$.    One can extend the above estimate for all values of $r$ and hence combining all the above results gives us the needed zero order estimate on $a_k(r)$.   The higher order portions of the norm of $a_k$ can be obtained from the zero order estimates after consider the equation that $a_k$ satisfies.

In case 3 everything works as in the previous two cases except now one just needs $ 0<\beta_1^++\sigma$ and $ \beta_1^-+\sigma<0$. 
\end{proof}

\noindent 
\textbf{Proof of Theorem \ref{main_linear}.}   In case 1 or case 2, by Lemma \ref{first_lemma}, for all $ k \ge 0$ there is some $C_k$ such that for all $ f(x)= b_k(r) \psi_k(\theta)$ there is some $ \phi(x)= a_k(r) \psi_k(\theta)$ which solves (\ref{linear_100}) and $ \| \phi \|_X \le C_k \|f\|_Y$.    One can show for all $ m \ge 1$ there is some $D_m$ such that one has for all $ f(x)=\sum_{k=0}^m b_k(r) \psi_k(\theta)$ there is some $ \phi(x)=\sum_{k=0}^m a_k(r) \psi_k(\theta)$  which solves (\ref{linear_100}) and $ \| \phi \|_X \le D_m \|f\|_Y$.     We now will show that $D_m$ is bounded.  Suppose not,  then there is some $ f_m \in Y$ and $ \phi_m \in X$ which solve (\ref{linear_100}) and $ \|f_m\|_Y \rightarrow 0$ and $ \| \phi_m \|_X=1$.    We claim that 
\[ \sup_{0<s<\frac{1}{2}} s^{\sigma t-N} \int_{A_s} | \phi_m|^t dx \rightarrow 0.\]   Towards a contradiction we assume, after passing to a subsequence,  that this quantity is bounded below by $ 2\E_0>0$ and hence there is some $ 0<s_m<\frac{1}{2}$ such that 
\[s_m^{\sigma t-N} \int_{A_{s_m}} | \phi_m|^t dx \ge \E_0.\]   We consider two cases: \\ 
Case (i); $ s_m$ bounded away from zero, and after passing to a subsequence we can assume $s_m \rightarrow s \in (0,\frac{1}{2}]$.\\ 
Case (ii); $ s_m \rightarrow 0$. \\

\noindent 
Case (i).    Since $ \| \phi_m\|_X \le 1$ we see that $ \phi_m$ is bounded in $ W^{2,t}_{loc}( \overline{B_1} \backslash \{0\})$ and after passing to a subsequence we have $ \phi_m \rightharpoonup \phi $ in $ W^{2,t}_{loc}( \overline{B_1} \backslash \{0\})$ and one can use weak lower semi continuity of the norms to see that $ \phi \in X$.  Also we have 
\[ \E_0 \le s_m^{\sigma t-N} \left( \int_{A_{s_m} \Delta A_s} | \phi_m|^t dx + \int_{A_s} | \phi_m|^t dx \right),\] where $A \Delta B:= A \backslash B \cup B \backslash A$ is the symmetric difference of $A$ and $B$.   Note that $ \phi_m$ bounded in $L^\infty_{loc}( \overline{B_1} \backslash \{0\})$ and $|A_{s_m} \Delta A_s| \rightarrow 0$.  Using this we can pass to the limit to see 
\[ \E_0 \le s^{\sigma t -N} \int_{A_s} | \phi|^t dx,\] and hence $ \phi \in X$ is non-zero.   Note also we can pass to the limit in the equation to see that $L_\gamma(\phi)=0 $ in $B_1 \backslash \{0\}$ with $ \phi=0$ on $ \partial B_1$  but this contradicts the result from Lemma \ref{first_lemma} which says the kernel of $L_\gamma$ is trivial.  \\

\noindent
Case (ii). Set $ \zeta_m(z):= s_m^\sigma \phi_m(s_m z)$ defined on $ 0<|z|<\frac{1}{s_m}$. For $ i \ge 2$ an integer we set $ E_i:=\{x \in \IR^N:  \frac{1}{i}<|x|<i\}$ and $\tilde{E}_i:= \{ x \in \IR^N:\frac{1}{2i}<|x|<2i \}$ and note that  
\begin{equation} \label{est_before}
\int_{1<|z|<2} | \zeta_m(z)|^t dz \ge \E_0, \qquad 
 \int_{\tau <|x|<2 \tau} | \zeta_m(z)|^t dz \le \tau^{N-\sigma t},
 \end{equation} for all $ 0<\tau \le \frac{1}{2 s_m}$. 

Note that $ \zeta_m(z)$ satisfies 
\begin{equation} \label{limit_1} 
L_\gamma(\zeta_m)(z) =g_m(z):=s_m^{\sigma+2} f_m(s_m z) \quad \mbox{ in } \; 0<|z|< \frac{1}{s_m},
\end{equation}  with $ \zeta_m=0$ on $|z|= \frac{1}{s_m}$.   Note that for each fixed $i$ we have $ \|g_m\|_{L^t(\tilde{E}_i)} \rightarrow 0$.  Also note the equation is satisfied on  $ \tilde{E}_i$ for all $i$ and for sufficiently large $m$.   By elliptic regularity and the estimates in (\ref{est_before}) we see that $ \zeta_m$ is bounded in $ W^{2,t}(E_i)$ for large enough $m$ and hence by a diagonal argument we can assume there is some $\zeta$ such that $ \zeta_m \rightharpoonup \zeta$ in $ W^{2,t}_{loc}( \IR^N \backslash \{0\})$ and $ \zeta$ satisfies both estimates in (\ref{est_before}) (and hence $\zeta \neq 0$).    Moreover we have $L_\gamma(\zeta)=0$ in $ \IR^N \backslash \{0\}$.    We now obtain the needed contradiction which amounts to showing the kernel of $L_\gamma$ is trivial over the appropriate space.  We write $ \zeta(z):= \sum_{k=0}^\infty a_k(r) \psi_k(\theta)$ and as usual $a_k$ will be of the form 
\[ a_k(r)= C_k r^{\beta_k^+} + D_k r^{\beta_k^-},\] here we are omitting writing out the individual ode's for each mode since we have already done this on the unit ball.  We will now translate the second estimate  in (\ref{est_before}) to some estimates on $a_k$. Note for each $ k \ge 0$ there is some $ \hat{C}_k>0$ such that 
\[ a_k(r)= \hat{C}_k \int_{| \theta|=1} \zeta(r \theta) d \theta,\] and then by Jensen's inequality 
\[ \int_\tau^{2 \tau} r^{N-1} |a_k(r)|^t dr \le \tilde{D}_k \int_\tau^{2 \tau} r^{N-1} \int_{| \theta|=1} | \zeta(r \theta)|^t d \theta dr \le \tilde{D}_k \tau^{N-\sigma t}\] for all $ \tau>0$.  Putting the explicit form of $a_k(r)$ in to the integral and using a change of variables we arrive at 
\[ \int_1^2 s^{N-1} \big| C_k \tau^{\beta_k^++\sigma} s^{\beta_k^+} + D_k \tau^{\beta_k^-+\sigma} s^{\beta_k^-} \big|^t ds \le \tilde{D}_k \] for all $ \tau>0$.  Note that  $\beta_k^+ +\sigma \neq \beta_k^-+\sigma$ and provided both are nonzero we can send $ \tau$ to $0$ or $ \infty$ to obtain a contradiction unless  $C_k=D_k=0$. Note we have both of these exponents are nonzero and hence we have that $\zeta=0$ a contradiction.   \\

\noindent
Case 3. In this case  everything follows as in the previous cases except now one needs $ \beta_1^-+\sigma<0$ and $ \beta_1^+ +\sigma>0$. 
  \hfill $\Box$

\subsection{The linearized operator $L$} 

Here we examine the linearized operator 
\[ L(\phi)(x)=L_\gamma(\phi) + p w(r)^{p-1} \phi = \Delta \phi + \gamma \phi_{rr} + p w(r)^{p-1} \phi.\]   In this section we consider the solvability of

\begin{equation} \label{linear_L}
 \left\{ \begin{array}{lcl}
\hfill   L(\phi)   &=&   f(x)\qquad \mbox{ in } B_1 \backslash \{0\},   \\
\hfill  \phi &=& 0 \hfill \mbox{ on }   \partial B_1,
\end{array}\right.
  \end{equation}

  \begin{thm} \label{linear_L_space}  \begin{enumerate} 
   \item Under the assumption of Theorem \ref{main_linear} part 1 there is some $C>0$ such that for all $ f \in Y$ there is some $ \phi \in X$ which solves (\ref{linear_L}) and $ \| \phi \|_X \le C \|f\|_Y$. 
  
      \item  Under the assumption of Theorem \ref{main_linear} part 2 there is some $C>0$ such that for all $ f \in Y$ there is some $ \phi \in X$ which solves (\ref{linear_L}) and $ \| \phi \|_X \le C \|f\|_Y$. 
      
      \item Under the assumptions of Theorem \ref{main_linear} part 3 there is some $C>0$ such that for all $ f \in Y_1$ there is some $ \phi \in X_1$ which solves (\ref{linear_L}) and $ \| \phi \|_X \le C \|f\|_Y$. 
      
      \item Let $0<\gamma<N-2$. There is some $ C>0$ such that for all bounded  $ f=f(r)$ there is some $ \phi=\phi(r)$ which solves (\ref{linear_L}) such that 
      \[ \sup_{0<r<1} \left( |\phi(r)|+ | \phi'(r)| \right) \le C \sup_{0<r<1} |f(r)|.\] 
      
  \end{enumerate}

  \end{thm}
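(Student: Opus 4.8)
The plan is to treat the operator $L = L_\gamma + pw^{p-1}$ as a compact perturbation of the isomorphism $L_\gamma$ from Theorem \ref{main_linear}, and to use the Fredholm alternative together with the triviality of the kernel established in Proposition \ref{kern_L}. First I would verify that the zero-order perturbation $\phi \mapsto pw(r)^{p-1}\phi$ is a \emph{compact} operator into $Y$, indeed a bounded operator that in fact gains a power of $r$. Since $w$ is smooth, bounded and radially decreasing with $w(1)=0$, one has $w(r)^{p-1} \le C$ on $(0,1)$, so $\|pw^{p-1}\phi\|_Y \le C\|\phi\|_X$ using only the zero-order part of the $X$-norm; compactness then follows because the $X$-norm controls $W^{2,t}$ on each annulus $A_s$ while the $Y$-norm only sees the $L^t$ piece, and the weights $s^{\sigma t - N}$ match up to a harmless factor (the same Rellich-type argument already used in the proof of Theorem \ref{main_linear}, Case (i)). Concretely: given a bounded sequence in $X$, extract a subsequence converging weakly in $W^{2,t}_{loc}(\overline{B_1}\setminus\{0\})$, strongly in $L^t_{loc}$, and control the tail $s \to 0$ uniformly via the defining suprema; this gives strong convergence of $pw^{p-1}\phi_m$ in $Y$.

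With compactness in hand, $L = L_\gamma(I + L_\gamma^{-1}(pw^{p-1}\,\cdot\,))$, and $L_\gamma^{-1}\circ(pw^{p-1}\,\cdot\,): X \to X$ (resp.\ $X_1 \to X_1$ in case 3) is compact. By the Fredholm alternative, $L: X \to Y$ is surjective with a bounded right inverse precisely when its kernel in $X$ is trivial. But that is exactly the content of Proposition \ref{kern_L}: any $\phi \in X$ (or $X_1$) with $L(\phi) = 0$ in $B_1\setminus\{0\}$, $\phi = 0$ on $\partial B_1$, must vanish. So parts 1, 2 and 3 follow immediately, with the constant $C$ being the norm of the inverse. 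I should note that for case 3 one restricts throughout to the subspaces with no $k=0$ mode, and both the isomorphism property of $L_\gamma$ on $X_1$ (Theorem \ref{main_linear}(3)) and the kernel triviality on $X_1$ (Proposition \ref{kern_L}, which covers all $k\ge 1$) are available, so the argument is identical.

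For part 4 the situation is genuinely different: we are in the regime $0<\gamma<N-2$ where the full space $X$ may \emph{not} give an isomorphism for $L_\gamma$ (only $X_1$ does, by Theorem \ref{main_linear}(3)), so one cannot invoke parts 1--3 directly for radial data. Instead I would work purely in the radial ODE. We seek $\phi = \phi(r)$ with
\[
(1+\gamma)\phi'' + \frac{N-1}{r}\phi' + pw(r)^{p-1}\phi = f(r), \qquad \phi(1)=0,
\]
equivalently $-\Delta_{N_\gamma}\phi = \tfrac{1}{1+\gamma}(f - pw^{p-1}\phi)$ in the fictitious dimension $N_\gamma = \frac{N+\gamma}{1+\gamma} > 2$. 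Here one uses that the homogeneous radial problem has only the two Euler solutions $r^{\beta_0^+}=r^0=1$ and $r^{\beta_0^-}$ with $\beta_0^- = -\frac{N-2-\gamma}{1+\gamma} < 0$; the solution bounded at the origin is the constant, so the radial operator $\phi \mapsto (1+\gamma)\phi'' + \frac{N-1}{r}\phi'$ with $\phi(1)=0$ and boundedness at $0$ has a one-dimensional homogeneous space (the Green's function for $-\Delta_{N_\gamma}$ on the ball, which is bounded since $N_\gamma>2$). One then writes the solution operator for $L_\gamma$ on bounded radial functions via this Green's function $G$: $\phi = \mathcal{G}(f - pw^{p-1}\phi)$ where $\mathcal{G}h(r) = \int_0^1 G(r,\rho) h(\rho)\,\rho^{N_\gamma - 1}\,d\rho$ maps $L^\infty(0,1)$ to $C^1([0,1])$ boundedly (again because $N_\gamma>2$ makes $G$ and $\partial_r G$ integrable against the weight). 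This reduces part 4 to showing $I + \mathcal{G}(pw^{p-1}\,\cdot\,)$ is invertible on $C^1$; since $\mathcal{G}$ is compact from $C^1$ to $C^1$ (Arzel\`a--Ascoli, using the $C^1$-bound plus a small improvement of modulus of continuity), Fredholm again applies, and the kernel is trivial by the $k=0$ part of Proposition \ref{kern_L} (which handles exactly the radial mode, via the $rw'(r)$ and $w'(r)$ comparison argument). The resulting bounded inverse gives the estimate $\sup(|\phi| + |\phi'|) \le C\sup|f|$.

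The main obstacle, I expect, is part 4 and specifically the construction and mapping properties of the radial Green's function: one must check carefully that in dimension $N_\gamma \in (2, N)$ (possibly non-integer) the Green's function $G(r,\rho)$ for $-\Delta_{N_\gamma}$ on $B_1$ with Dirichlet data is genuinely bounded and that $\partial_r G$ is integrable in $\rho$ against $\rho^{N_\gamma - 1}$ uniformly in $r$, so that $\mathcal G$ lands in $C^1$ and is compact there. The explicit formula $G(r,\rho) \sim \rho^{N_\gamma-1}(\max(r,\rho)^{2-N_\gamma} - 1)$ up to constants makes this checkable, but the behavior as $r \to 1$ (to get the $\phi'$ bound up to the boundary) and as $r,\rho \to 0$ requires a bit of care; this is where most of the routine-but-necessary work sits. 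The compactness and Fredholm steps, and the appeal to Proposition \ref{kern_L} for kernel triviality, are then standard.
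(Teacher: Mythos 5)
Your proposal is correct and follows essentially the same route as the paper: parts 1--3 are obtained by writing $L=L_\gamma+K$ with $K\phi=pw^{p-1}\phi$ compact from $X$ to $Y$ (resp. $X_1$ to $Y_1$) and applying Fredholm theory together with the kernel triviality of Proposition \ref{kern_L}, and part 4 is handled by the radial solution operator for $(\gamma+1)u''+\frac{N-1}{r}u'$ with $u(1)=0$ --- your Green's function $\mathcal{G}$ in the fictitious dimension $N_\gamma$ is exactly the paper's explicit operator $K_0$ --- followed by a Fredholm argument on bounded radial functions and the $k=0$ case of Proposition \ref{kern_L}. The only cosmetic difference is that the paper runs the Fredholm step in $L^\infty_{rad}$ and recovers the bound on $\phi'$ afterwards from the identity $a_0=K_0(b_0-pw^{p-1}a_0)$, whereas you set it up directly in $C^1$.
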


  \begin{proof} 1 and 2. Define $K:X \rightarrow Y$ by $ K(\phi):= p w^{p-1} \phi$.  It is easily seen that $K$ is a compact mapping from $X$ to $Y$ and note we can write $L=L_\gamma +K$.   So we have the desired result via Fredholm theory provided the only $ \phi \in X$ such that $L(\phi)=0$ is $ \phi=0$.   But this follows from Proposition  \ref{kern_L}.  \\
  
  \noindent
  3.  This follows exactly the same proof as part 1 and 2 of the current theorem, we just need to check that $K:X_1 \rightarrow Y_1$ is compact.   \\ 
  
  \noindent
  4.  Define $K_0(g)=u$ where $ (\gamma+1) u''(r) + \frac{N-1}{r} u'(r) = g(r)$ in $ 0<r<1$ with $ u(1)=0$.   We get an explicit formula for $K_0$.  Given $ g$ define 
  \[ h(r):=\frac{1}{r^\frac{N-1}{\gamma+1}} \int_0^r  \frac{ \tau^\frac{N-1}{\gamma+1} g(\tau)}{\gamma+1} d \tau,\] and we define $u(r)$ via $ -u(r):=\int_r^1 h(\hat{t}) d \hat{t}$ and then note $u$ satisfies the required ode and we have the estimate 
  \[ \sup_{0<r<1} \left( |u(r)| + |u'(r)| \right) \le C_3 \sup_{0<r<1} |g(r)|.\]  This shows that the mapping  $K_0:L^\infty_{rad}(B_1) \rightarrow L^\infty_{rad}(B_1)$ is compact.    We now try and solve (\ref{linear_L}) and we will use the notation $ \phi(r)=a_0(r)$ and $ f(r)=b_0(r)$. Then note to solve 
  $L(a_0)=b_0(r)$ in $0<r<1$ with $ a_0(1)=0$ we can write this is as $ a_0 + K_0(p w^{p-1} a_0)= K_0(b_0)$ and if the only $ a_0 \in L^\infty_{rad}$ such that $a_0 + K_0(p w^{p-1} a_0)=0$ is $ a_0=0$  then by Fredholm theory there is some $C_0>0$ such that $ \sup_{0<r<1} |a_0(r)| \le C_0 \sup_{0<r<1} |K_0(b_0)|$ and it will be clear that $\sup_{0<r<1} |K_0(b_0)| \le C_1 \sup_{0<r<1} |b_0(r)|$ and hence we'd have $ \sup_{0<r<1} |a_0(r)| \le C_2 \sup_{0<r<1} |b_0(r)|$.   Now recalling Proposition \ref{kern_L} we have the desired kernel is empty.    We now return to $ a_0 = K_0(b_0 - p w^{p-1} a_0)$ and then note by the earlier estimate this gives 
  \[ \sup_{0<r<1} \left( |a_0(r)|+ |a_0'(r)| \right) \le C_3 \sup_{0<r<1} \big| b_0 - p w^{p-1} a_0 \big| \le C_4 \sup_{0<r<1} | b_0|.\] 
  
 \end{proof}

  \section{The fixed point arguments} \label{fixed_point_arg} 
  
  \subsubsection{Equation (\ref{eq_zero_pert})}
  
Here we obtain a positive bounded solution $u$ of (\ref{eq_zero_pert}) on $B_1 \backslash \{0\}$ and recall we are looking for solutions of the form $ u=w+ \phi$ where $ \phi$ solves (\ref{non_lin_pert1}).  To prove the existence of $ \phi$ we will show that the nonlinear mapping $J_\delta$ (as defined by $ J_\delta(\phi)=\psi$ where $ \psi$ satisfies (\ref{non_lin_pert1_map})) is a contraction on a suitable space.   In the process of doing this we will need the following facts: for $p>1$ there is some $ C_p>0$ such that for all $ 0<w \in \IR$ and $ \phi, \hat{\phi} \in \IR$  
\begin{equation}  \label{first_app}
\big|  |w+\phi|^p - p w^{p-1} \phi - w^p \big| \le C_p \left( w^{p-2} \phi^2 + | \phi|^p \right) 
\end{equation} 
\begin{equation}  \label{sec_app}
\big|  |w+ \hat\phi|^p - |w+\phi|^p - p w^{p-1} ( \hat \phi - \phi) \big| \le C_p \left( w^{p-2} ( | \phi| + | \hat \phi|) + | \phi|^{p-1} + | \hat \phi|^{p-1} \right) | \hat \phi - \phi|.
\end{equation} 

The exact spaces we will work on will depend on the value of $ \gamma$; we split this into the cases $ \gamma>N-2$ and $ 0 <\gamma <N-2$.  The first case will be the easy case and is fairly standard and we work directly in $X$ (recall $X$ depends on $ \sigma$ and $t$).   For the second case we could do the same but the issue now is $ \phi$ can then be unbounded near the origin which would force $u$ to be unbounded near the origin and recall we want $u$ bounded.  One could try and apply elliptic regularity but we prefer to avoid this since we are dealing with a nonstandard operator with possible issues at the origin.  Additionally we want $u=w+ \phi$ to be positive and hence to show this we either need $ \phi$ small in $L^\infty$ (with an additional argument near the boundary)  or we can instead try and apply maximum principles to show $u$ positive.  We will use the first approach and so this causes us to use slightly more complicated function space.  \\

\noindent
\textbf{Case 1.}  $ \gamma>N-2$.   In this case  we fix $ N<t< \infty$ and $ \sigma$ as in  Theorem \ref{main_linear} part 2.  We now show that $J_\delta:X \rightarrow X$ (here $X$ is defined as before with $ t$ and $ \sigma$ as above). Note by a scaling argument and the Sobolev imbedding there is some $C_1>0$ such that for all $ \zeta \in X$ we have 
\begin{equation} \label{1011}
\sup_{A_s} | \zeta | \le \frac{C_1 \| \zeta \|_X}{s^\sigma},
\end{equation} for all  $0<s \le \frac{1}{2}$. 
Let $C>0$ be from  Theorem \ref{linear_L_space} part 2. \\

\noindent \textbf{Into.} Let $ \phi \in B_R \subset X$ where $0<R \le 1$ (here $B_R$ is the closed ball of radius $R$ centered at the origin in $X$) and let $ \psi=J_\delta(\phi)$   (we are attempting to show that $ J_\delta$ is into $B_R$). Then we have 
\[ \| \psi \|_X \le C |\delta| \| g |w+ \phi|^p \|_Y + C \| |w+\phi|^p - w^p - p w^{p-1} \phi \|_Y,\] note since $Y$ is basically an $L^t$ norm we can replace the desired term with the upper bound coming from (\ref{first_app}).  It is easily seen that there is some $C_2>0$ (independent of $ 0<R \le 1$) we have  $C |\delta| \| g |w+ \phi|^p \|_Y \le C_2 | \delta|$.     A direct computation shows that provided $ \sigma \le \frac{2}{p-1}$ we have $ \| | \phi |^p \|_Y \le C_2 R^p$ but note we have $ \sigma<0$ and hence this estimate holds.   We now examine the term $ \| w^{p-2} \phi \|_Y$.   Note that for $p<2$  there are some added difficulties for this term near the boundary of $B_1$.   Note that using the above argument we have 
\[ \sup_{0<s \le \frac{1}{4}} s^{(2+\sigma)t-N} \int_{A_s} w^{p-2} | \phi|^{2t}dx \le C_2 R^{2t}\] provided $ \sigma \le 2$, which again trivially holds since $ \sigma<0$.   We now examine the portion of the norm for $ s $ close to $ \frac{1}{2}$ where for $ p<2$  the term $ w^{p-2}$ can cause problems.  Using a scaling argument and the Sobolev imbedding we obtain the existence of some $ C_2>0$  such that $ | \phi(x)| \le C_2 R \delta(x)$ for all $ \frac{1}{2} \le |x| \le 1$, where $ \delta(x):=dist(x, \partial B_1)$ is the Euclidean distance from $x$ to $ \partial B_1$.  Using this estimate we see that 
\begin{equation} \label{near_one}
\sup_{\frac{1}{4} \le |x| <1 } w^{(p-2)t} | \phi|^{2t} \le \sup_{\frac{1}{4} \le |x| <1 }  w(x)^{pt} \left( \frac{ C_2 R \delta(x)}{w(x)} \right)^{2t} \le C_3 R^{2t},
\end{equation} since  $ w(x) \ge \E \delta(x)$ on $B_1$ for some $ \E>0$ small enough.  From this and the earlier estimate we can conclude $ \| w^{p-2} \phi^2 \|_Y \le C_2 R^2$.  Combining the estimates shows that $ \| \psi \|_X \le C_4 | \delta| + C_4 R^2 + C_4 R^p$ and hence for $J_\delta$ to be into $B_R$ it is sufficient that 
\begin{equation} \label{into_0}
C_2 \left( | \delta| + R^2 + R^p \right) \le R.
\end{equation}

\noindent
\textbf{Contraction.} Let $ \hat \phi,\phi \in B_R$ and $ J_\delta( \hat \phi) =\hat \psi,  J_\delta(\phi)= \psi$. Then we have 
\[ -L( \hat{\psi}- \psi)= \delta g \left( | w+ \hat \phi|^p - |w+ \phi|^p \right) + | w + \hat \phi|^p - |w+ \phi|^p - p w^{p-1} ( \hat \phi - \phi).\]  Using (\ref{sec_app}) we see 
\begin{eqnarray*}
\frac{ \| \big| \hat \psi - \psi \|_X}{C_p} &\le & C |\delta| \Big\| \big| w^{p-2} ( | \hat \phi| + | \phi|) + | \hat \phi |^{p-1} + | \phi |^{p-1} + p w^{p-1} \big| | \hat \phi - \phi| \Big\|_Y  \\
&& + \Big\| \big| w^{p-2} ( | \hat \phi| + | \phi|) + | \hat \phi |^{p-1} + | \phi |^{p-1}  \big| | \hat \phi - \phi| \Big\|_Y
\end{eqnarray*} and note the first term on the right differs  from the second by only the linear term $ p w^{p-1} | \hat \phi - \phi |$ and hence we can drop the first term on the right by taking $ \delta>0$ small.  Writing out the estimate $ \| | \phi |^{p-1} | \hat \phi - \phi \|_Y$ we see provided $ 2 -\sigma(p-1) \ge 0$  then we have this term is bounded above by $ C R^{p-1} \| \hat \phi - \phi \|_X$.  We now examine the term $ \| w^{p-2} | \phi| | \hat \phi - \phi | \|_Y$.  A computation shows that 
\[ \| w^{p-2} | \phi| | \hat \phi - \phi| \|_Y^t \le \sup_{0<s < \frac{1}{2}} \left( s^{2t} \sup_{A_s} w^{(p-2)t} | \phi|^t \right) \| \hat \phi - \phi \|_X^t.\]  As before the case of $p<2$ causes an added issue for $ s \nearrow \frac{1}{2}$.  Using (\ref{near_one}) we see $ \sup_{\frac{1}{4}<s < \frac{1}{2}}  s^{2t} \sup_{A_s} w^{(p-2)t} | \phi|^t \le C_4 R^t$ and hence we need to just obtain an estimate for $ 0<s \le \frac{1}{4}$.  Note that a computation shows $ \sup_{0<s < \frac{1}{4}}  s^{2t} \sup_{A_s} w^{(p-2)t} | \phi|^t \le C_4 R^t$ provided $ 2-\sigma \ge 0$ which we have.   Combining all these results we arrive at:  by fixing $ 0<R$ sufficiently small and then taking $ | \delta$ sufficiently small we see that we have $ \| \hat \psi - \psi \|_X \le K_0 \| \hat \phi - \phi \|_X$ where $K_0<1$.   Moreover by fixing $ 0<R$ small and then taking $ | \delta|$ small we see we can satisfy (\ref{into_0}) and hence $J_\delta$ is a contraction on $B_R$.   By applying Banach's fixed point theorem we see $J_\delta$ has a fixed point $ \phi$ and hence $u=w + \phi$ solves (\ref{eq_zero_pert}) in $B_1 \backslash \{0\}$ but with $ u^p$ replaced with $|u|^p$.   By taking into account the function spaces we see that $u$ is bounded and by taking $ R>0$ small we see that $u$ is positive provided we stay away from $ \partial B_1$.    By using the fact we have an estimate like $ \sup_{\frac{1}{2}<|x|<1} | \nabla \phi| \le C_5 R$ and since $ w'(1)<0$ we see that by taking $ R$ small that we have $ u=w+ \phi>0$ in $B_1$.  \\

\noindent 
\textbf{Case 2.} $ 0<\gamma < N-2$.  Let $ N<t<\infty$ and $ \sigma$ as in Theorem 3 part 3.   We now define the space we work in.  Given $ \phi(x)$ we write 
\[ \phi(x):=\sum_{k=0}^\infty a_k(r) \psi_k(\theta) = a_0(r) + \sum_{k=1}^\infty a_k(r) \psi_k(\theta)=: \phi_0(r) + \phi_1(x).\]   Define the  $ \W$ norm of $ \phi$ via 
\[ \| \phi \|_{\W} = \sup_{0<r<1} \left\{ | \phi_0(r)| + | \phi_0'(r) | \right\} + \| \phi_1 \|_X, \] and we impose the boundary condition $ \phi=0$ on $ \partial B_1$.  Let $B_R$ denote the closed ball or radius $R$ centered at the origin in $ \W$. \\ 

\noindent
\textbf{Into.}  Let $ \phi \in B_R$ and $ \psi:=J_\delta(\phi)$.  Then note there is some $C>0$ (independent of $ 0<R \le 1$) such that  $ \| \phi \|_{L^\infty} \le C \|\phi \|_{\W}$ and so $ | \phi(x)| \le C R$.    Let $f$ denote the right hand side of (\ref{non_lin_pert1_map}) and note we have 
$ |f(x)| \le C \left( | \delta| + w^{p-2} \phi^2 + R^p \right) $ and in the case of $ p<2$ an additional argument shows that 
$ |f(x)| \le C \left( | \delta| +  R^2 + R^p \right).$   We write $ f(x)= f_0(r) + f_1(x)$ where we are using the notation introduced to decompose $ \phi(x)= \phi_0(r) + \phi_1(x)$ and then one sees that $ |f_0(r)| \le C \left( | \delta| +  R^2 + R^p \right).$    From this an the earlier ODE arguments we see that 
\[ \sup_{0<r<1} ( | \psi_0(r)| + | \psi_0'(r)|) \le C\left( | \delta| +  R^2 + R^p \right),\] since   $ -L(\psi_0)= f_0$.  Now note that $ -L(\psi_1)= f_1(x) = f(x)-f_0(r)$ and using the estimates on $ f(x)$ and $ f_0(r)$ we see that $ |f_1(x)| \le C \left( | \delta| +  R^2 + R^p \right).$  But using the linear theory for $L$ we have $ \| \psi_1 \|_X \le C \|f_1 \|_Y$ and note that provided $ 2+ \sigma \ge 0$ we can translate the $L^\infty$ bound on $f_1$ to a $Y$ bound on $ f_1$.  So provided $ \sigma \ge -2$ we have $\| \psi_1 \|_X \le C \|f_1 \|_Y \le C\left( | \delta| +  R^2 + R^p \right) $.  So note for $ \psi= J_\delta(\phi) \in B_R \subset \W$ it is sufficient that 
\begin{equation} \label{into_200}
2C\left( | \delta| +  R^2 + R^p \right) \le R.
\end{equation}

\noindent
\textbf{Contraction.} Let $ \hat \phi, \phi \in B_R \subset \W$ and $ \hat \psi:=J_\delta(\hat \phi)$ and $ \psi:= J_\delta(\phi)$.   Then note we have '
\[ -L(\hat \psi - \psi)= \delta g \left( |w+ \hat \phi|^p - | w + \phi |^p \right) + \left\{ | w + \hat \phi|^p - | w + \phi|^p - p w^{p-1} ( \hat \phi - \phi)\right\} =: F^1 + F^2=F.\]  By  (\ref{sec_app}) we have 
\[ | F^2| \le C \big| w^{p-2} ( | \hat \phi| + |  \phi | ) + | \phi|^{p-1} + | \hat \phi|^{p-1} \big| | \hat \phi - \phi|,  \quad \mbox{ and } \]
\[ | F^1| \le C | \delta| |g| C \big| w^{p-2} ( | \hat \phi| + |  \phi | ) + | \phi|^{p-1} + | \hat \phi|^{p-1} + p w^{p-1} \big| | \hat \phi - \phi|.\]   Note that from earlier arguments we have $ \sup_{B_1} | \phi| \le C R$. From this and an additional argument in the case of $ p<2$ (which we have already done) we see that  
\begin{equation} \label{1010}
|F(x)| \le C ( R + R^{p-1} + \delta)  | \hat \phi(x) - \phi(x)|.
\end{equation}
Using this and the earlier ode results we see 
\[ \sup_{0<r<1} \left\{ | \hat \psi_0(r) - \psi_0(r)| +  | \hat \psi_0'(r) - \psi_0'(r)|\right\} \le  C ( R + R^{p-1} + \delta) \sup_{0<r<1} \int_{| \theta |=1} | \hat \phi(r\theta) - \phi(r \theta)| d \theta,\]   and using the fact $ \| \zeta\|_{L^\infty} \le C \| \zeta \|_{\W}$ we arrive at  
\[ \sup_{0<r<1} \left\{ | \hat \psi_0(r) - \psi_0(r)| +  | \hat \psi_0'(r) - \psi_0'(r)|\right\} \le  C ( R + R^{p-1} + \delta)  \| \hat \phi - \phi \|_{\W}.\] 

Now note that $-L( (\hat \psi - \psi)_1) = F_1$ and hence $ \| (\hat \psi - \psi)_1 \|_X \le C \|F_1 \|_Y$.  Note that from the earlier computations we have 
\[ | F(x)|, |F_0(r)| \le C (R + R^{p-1} +\delta) \| \hat \phi - \phi \|_{\W},\] and hence we have the same pointwise bound for $F_1$.  This shows that for $ \sigma \ge -2$ we have $ \|F_1 \|_Y \le C (R + R^{p-1} +\delta) \| \hat \phi - \phi \|_{\W}$ and hence we have 
\[ \| ( \hat \psi - \psi )_1 \|_X \le C (R + R^{p-1} +\delta) \| \hat \phi - \phi \|_{\W}.\]    Combining this with the earlier result we have 
\[ \| \hat{\psi}- \psi \|_{\W} \le 2C (R + R^{p-1} +\delta) \| \hat \phi - \phi \|_{\W},\] and hence we see $J_\delta$ a contraction on $B_R \subset \W$ provided we have $2C (R + R^{p-1} +\delta)<1$ and (\ref{into_200}) holds.    Fix $ 0<R$ very small and then take $ |\delta|$ sufficiently small and we easily satisfy the two conditions.  By taking  $R>0$ small and using the bound on the gradient of $ \phi$ near $ \partial B_1$ (and the fact that $ w'(1)<0$) we see $ u=w+\phi>0$ in $B_1$.

  \subsubsection{Equation (\ref{eq_second_pert})} \label{pert_111}

In this section we want to prove the existence of positive solutions of (\ref{eq_second_pert}) which rewrite in terms of $ y \in \Omega_\delta$;
\begin{equation} \label{eq_second_pert_y}
 \left\{ \begin{array}{lcl}
\hfill  -\Delta_y u(y) - \gamma \sum_{i,j=1}^N \frac{y_i y_j}{|y|^2} u_{y_i y_j}(y)  &=&  |u(y)|^p \qquad  y \in \Omega_\delta \backslash \{0\},   \\
\hfill u &=& 0 \hfill  y \in   \partial \Omega_\delta,
\end{array}\right.
  \end{equation} where $\Omega_\delta$ is a small perturbation of the unit ball in $ \IR^N$ (and note we replaced  $u^p$ with $|u|^p$).

We now perform a change of variables to reduce the problem to one on the unit ball (we take this change of variables from \cite{pert_ori}). 
 Fix $\psi:\overline{B_1}\to \IR^N$ (and for simplicity of notation we assume $ \psi(0)=0$; otherwise $u$ would be singular at $ y_\delta =\delta \psi(0)$) be a smooth map and for ${\delta}>0$ define
\[
\Omega_{\delta}:= \{x +\delta\psi(x) : x\in B_1\}.
\]  This domain will be the small perturbation of the unit ball we work on. 
There is some small $\delta_0>0$ such that for all $0 <\delta<\delta_0$ one has that $\Omega_\delta$ is diffeomorphic to the unit ball $B_1$. Let $y=x+\delta \psi(x)$ for $x\in B_1$ and note there is some $\tilde{\psi}$ smooth such that $x=y+{\delta}\tilde{\psi}({\delta},y)$ for $y\in \Omega_{\delta}$. Given $u(y)$ defined on $y\in \Omega_{\delta}$ or $v(x)$ defined on $x\in B_1$ we define the other via
$u(y) =v(x)$.  So to find a positive singular solution $u(y)$ of (\ref{eq_second_pert_y}) it is sufficient to find a positive singular solution $v(x)$ of some, to be determined equation,  on the unit ball.    To compute the equation for $v(x)$ we will use the chain rule, but we mention that the computation becomes somewhat messy.  A computation shows that 

\[
\begin{array}{rl}
u_{y_iy_j}
=&v_{x_ix_j}+\delta\sum_{l=1}^Nv_{x_ix_l} {\tilde{\psi}^l}_{y_j}
+\delta\sum_{k=1}^Nv_{x_kx_j} {\tilde{\psi}^k}_{y_i}+\delta^2
\sum_{k=1}^Nv_{x_kx_j} {\tilde{\psi}^j}_{y_j}
{\tilde{\psi}^k}_{y_i}
\\ & \\
&\ \ \ \ \ \ \ \ +\delta^2
\sum_{k,h=1}^Nv_{x_kx_h} {\tilde{\psi}^h}_{y_j}
{\tilde{\psi}^k}_{y_i}
+\delta\sum_{k=1}^N v_{x_k}{\tilde{\psi}^k}_{y_iy_j}
\end{array}
\] and using this formula we can write $ \Delta_y u(y)=\Delta_x v(x) + E_\delta(v)$  and $ u_{y_i y_j} = v_{x_i x_j} + E_\delta^{i,j}(v)$.  Also we have
\[ \frac{y_i y_j}{|y|^2}= \frac{x_i x_j + \delta (x_i  \psi^j + \psi^i x_j) + \delta^2 | \psi|^2 }{ |x|^2+ 2 \delta x \cdot \psi + \delta^2 | \psi|^2}.\]   So $u(y)$ solves (\ref{eq_second_pert_y}) if $v(x)$ solves 
\begin{equation} \label{v_final}
0=\Delta v + E_\delta (v) + \gamma \sum_{i,j=1}^N \frac{ y_i y_j}{|y|^2} \left( v_{x_i x_j} + E_\delta^{i,j}(v) \right) + |v|^p \qquad \mbox{ in $B_1 \backslash \{0\}$,} 
\end{equation} 
with $v=0$ on $ \partial B_1$.  We look for solutions of the form $ v(x)= w(x)+ \phi(x)=w(r) + \phi(x)$.  A computation shows that $ \phi$ must satisfy 
\begin{eqnarray} \label{second_perturb_phi}
-L(\phi) &=& | w+ \phi|^p - w^p - p w^{p-1} \phi + E_\delta(w) + E_\delta(\phi)  \nonumber\\
&& + \gamma \sum_{i,j=1}^N \left( \frac{ y_i y_j}{|y|^2} - \frac{x_i x_j}{|x|^2} \right) \phi_{x_i x_j} \nonumber \\
&& + \gamma \sum_{i,j=1}^N \frac{ y_i y_j}{|y|^2} \left( E^{i,j}_\delta (w) + E_\delta^{i,j}( \phi) \right) \nonumber \\ 
&& + \gamma \sum_{i,j=1}^N \left( \frac{ y_i y_j}{|y|^2} - \frac{x_i x_j}{|x|^2} \right) w_{x_i x_j} \quad \mbox{ in } B_1 \backslash \{0\}, 
\end{eqnarray}  with $ \phi=0$ on $ \partial B_1$.  Note we replaced the $v^p$ term with $|v|^p$,  which is standard practice and one then later shows $v>0$.   Note all terms on the right hand side, except $| w+ \phi|^p - w^p - p w^{p-1} \phi$, are perturbation terms which are zero when $ \delta=0$.   As before we hope to find a solution of the above via a fixed point argument.  Towards this define $J_\delta(\phi)=\psi$ where 
\begin{eqnarray} \label{second_psi}
-L(\psi) &=& | w+ \phi|^p - w^p - p w^{p-1} \phi + E_\delta(w) + E_\delta(\phi)  \nonumber\\
&& + \gamma \sum_{i,j=1}^N \left( \frac{ y_i y_j}{|y|^2} - \frac{x_i x_j}{|x|^2} \right) \phi_{x_i x_j} \nonumber \\
&& + \gamma \sum_{i,j=1}^N \frac{ y_i y_j}{|y|^2} \left( E^{i,j}_\delta (w) + E_\delta^{i,j}( \phi) \right) \nonumber \\ 
&& + \gamma \sum_{i,j=1}^N \left( \frac{ y_i y_j}{|y|^2} - \frac{x_i x_j}{|x|^2} \right) w_{x_i x_j} \quad \mbox{ in } B_1 \backslash \{0\}.
\end{eqnarray}

\noindent
\textbf{Proof of Theorem \ref{main_non_linear_second}.}  Let $X,Y$ denote the spaces as defined before where either we are taking $ \sigma$ positive or negative along with the extra assumptions on $ \sigma$.  Then one can easily see that $J_\delta$ is a contraction on $B_R$ (closed ball of radius $R$ centered at the origin in $X$) provided $ 0<R$ is fixed small and then $\delta>0$ is chosen small enough.      Also by taking $ R$ sufficiently small we see that $ v=w+ \phi$ is not indentically zero.    \\

\noindent
1. In the case of $ \gamma>N-2$ (and hence $ \sigma<0$) for any $ \E>0$ we can take $ R>0$ small enough such that $ v=w+\phi$ is positive and bounded away from zero on $ |x|<1-\E$.   Also note that via the Sobolev imbedding we can make the gradient of $ \phi$ small away from the origin.  Since $ w'(1)<0$ we see this forces $ v>0$ near $ |x|=1$.     Hence in the case of $\gamma>N-2$ we can find a positive bounded solution $v$.  \\

\noindent
2. In this case we can still follow the above procedure to obtain a solution $ v(x)= w(x) + \phi(x)$ of (\ref{v_final}).  The relevant linear theory we are using is Theorem \ref{linear_L_space} part 1 and we will later take $ \sigma>0$ sufficiently small. 
Note that by taking $ R>0$ small we have $ v>0$ away from the origin.   But we might still have $ v$ change sign near the origin and also it might blow up at the origin.  So we have a nonzero solution $ v$ of (\ref{v_final}) and hence $u$ is a nonzero solution of  (\ref{eq_second_pert_y}).  Also note we have $u \in W^{2,t}_{loc}(\overline{\Omega_\delta} \backslash \{0\})$ with  the bounds $ |u(y)| |y|^\sigma + | \nabla u(y)| |y|^{\sigma+1} \le C$ for all $ y \in \Omega_\delta \backslash \{0\}$; after considering the bounds on $v$ and using the Sobolev Imbedding Theorem.   Also one can see $u$ inherits the same regularity of $v$ near the origin; ie for $ s>0$ small we have 
\[ s^{(2+\sigma) t-N} \int_{s<|y|<2s} |D^2 u(y)|^t dy \le C_1.\]    
 If we let $ f(y):= - |u(y)|^p $  then $ f \in L^T(\Omega_\delta)$ for all $T<\frac{N}{\sigma p}$ after consider the bound on $u$.  
 So rewriting the equation for $u$ in terms of $f$ we get
\begin{equation} \label{eq_second_reg}
 \left\{ \begin{array}{lcl}
\hfill L_\gamma(u)(y)= \Delta_y u(y) + \gamma \sum_{i,j=1}^N \frac{y_i y_j}{|y|^2} u_{y_i y_j}(y)  &=&  f(y) \qquad  \mbox{ in } \Omega_\delta \backslash \{0\},   \\
\hfill u &=& 0 \hfill  \mbox{ on }   \partial \Omega_\delta.
\end{array}\right.
\end{equation} 
 Now recall $X$ was a space of functions defined on the punctured unit ball with certain regularity assumptions.  We let $ \tilde{X}:=\{ u:  \exists v \in X \mbox{ with } u(y)=v(x) \}$.   So note that $ u \in \tilde{X}$.  We now prove a maximum principle and then return to the proof of part 2 of the theorem. 
 
 \begin{lemma} (Maximum Principle) Suppose $ u \in \tilde{X}$  with $ -L_\gamma(u)(y)=-f(y) \ge 0$ in $\Omega_\delta \backslash \{0\}$ (where $f$ is sufficiently regular away from the origin and has slight blow up at the origin; here we are modelling $f$ on the explicit $f$ above).  Then $ u \ge 0$ in $ \Omega_\delta \backslash \{0\}$. 
 \end{lemma}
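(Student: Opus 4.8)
The plan is to argue by contradiction, exploiting the structure $L_\gamma(u) = \Delta u + \gamma u_{rr}$ together with the one-dimensional "fractional dimension" reformulation used throughout the paper. Suppose $u \in \tilde X$ is not nonnegative on $\Omega_\delta \backslash \{0\}$. Since $u = 0$ on $\partial\Omega_\delta$ and $u$ has at worst the mild blow-up at the origin encoded by membership in $\tilde X$ (so $|u(y)| \le C|y|^{-\sigma}$ with $\sigma$ small, in particular $|y|^{N-2}|u(y)| \to 0$ as $y \to 0$ when $\sigma < N-2$, which holds in case 2), the set $\{u < 0\}$ is a nonempty open subset of $\Omega_\delta$ whose boundary meets $\partial\Omega_\delta$ only where $u = 0$, and which — because of the growth control near $0$ — can be "capped off" at the origin. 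I would first expand into spherical harmonics $u(x) = \sum_{k \ge 0} a_k(r)\psi_k(\theta)$ after pulling back to $B_1$; but the cleaner route is to work directly with $L_\gamma$ and a comparison/barrier argument, since the sign condition is global.

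The key steps, in order: (1) Regularity reduction — away from the origin $L_\gamma$ is a uniformly elliptic operator with bounded measurable (indeed, away from $0$, continuous) coefficients, so $u \in W^{2,t}_{loc}(\overline{\Omega_\delta}\backslash\{0\})$ satisfies the equation in the strong sense there and the classical strong/weak maximum principle applies on any $\Omega_\delta \backslash \overline{B_\varepsilon}$; the only issue is the origin. (2) Barrier at the origin — construct a function $\eta_\varepsilon$ that is $L_\gamma$-superharmonic (i.e. $L_\gamma \eta_\varepsilon \le 0$), positive, small outside $B_\varepsilon$, and large on $\partial B_\varepsilon$, dominating the possible blow-up of $u$. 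Since $L_\gamma(r^{-\beta}) $ for the Euler-type operator $(1+\gamma)a'' + \frac{N-1}{r}a'$ acting on $r^{-\beta}$ is a multiple of $r^{-\beta-2}$ with sign controlled by the indicial roots $\beta_0^{\pm}$ computed in Lemma \ref{first_lemma}, an appropriate power $r^{-\beta}$ with $\beta \in (\sigma, N_\gamma - 2)$ — available precisely because $\sigma < \frac{N-2-\gamma}{2(1+\gamma)}\cdot(\text{stuff})$, i.e. $\sigma$ is small in case 2 — serves as the barrier. (3) Apply the maximum principle to $u + \kappa \eta_\varepsilon$ on $\Omega_\delta \backslash \overline{B_\varepsilon}$ for any $\kappa > 0$: on $\partial\Omega_\delta$ it is nonnegative, on $\partial B_\varepsilon$ it is nonnegative by the barrier domination, and $-L_\gamma(u + \kappa\eta_\varepsilon) = -f + \kappa(-L_\gamma\eta_\varepsilon) \ge 0$, so $u + \kappa\eta_\varepsilon \ge 0$ on $\Omega_\delta \backslash \overline{B_\varepsilon}$. (4) Let $\varepsilon \to 0$ and then $\kappa \to 0$: since $\eta_\varepsilon$ can be taken with $\|\eta_\varepsilon\|_{L^\infty(\Omega_\delta \backslash B_{\varepsilon_0})} \to 0$ as $\varepsilon \to 0$ for fixed $\varepsilon_0$ (because $\beta > \sigma \ge 0$-ish), we conclude $u \ge 0$ on $\Omega_\delta \backslash \{0\}$.

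The main obstacle I anticipate is step (2), constructing the barrier in the genuinely perturbed domain $\Omega_\delta$ with the non-divergence-form, singular-at-the-origin operator $L_\gamma$: one must check that the candidate power $r^{-\beta}$ (or a radial modification thereof, pulled back through the change of variables $y = x + \delta\psi(x)$) retains the correct sign of $L_\gamma$ after the $O(\delta)$ perturbation of the coefficients $\frac{y_iy_j}{|y|^2}$, and that the exponent window for $\beta$ — dictated by the indicial roots $\beta_0^\pm$ of the radial Euler operator and the constraint that $r^{-\beta}$ dominates $|u| \lesssim r^{-\sigma}$ while still decaying, i.e. $\sigma < \beta$ and $\beta < $ the relevant threshold ensuring superharmonicity — is nonempty. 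This window is nonempty exactly under the smallness of $\sigma$ from Theorem \ref{main_linear} part 1 (the regime of case 2), so the lemma's hypotheses are what make the argument close; I would also need to verify that the $W^{2,t}_{loc}$-regularity of $u$ near the boundary $\partial\Omega_\delta$, together with $u = 0$ there, is enough to run Hopf/comparison cleanly, but that is standard once the origin is handled.
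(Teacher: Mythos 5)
Your barrier argument is correct, but it is a genuinely different route from the paper's. The paper does not compare $u$ directly with a supersolution: it introduces approximate problems $-L_\gamma(u_\varepsilon)=-f$ on the punctured domains $\Omega_{\delta,\varepsilon}=\Omega_\delta\setminus B_\varepsilon$ with zero boundary data, gets $u_\varepsilon\ge 0$ from the classical maximum principle, proves the uniform weighted bound $0\le |y|^{\sigma}u_\varepsilon(y)\le C$ (using exactly the radial supersolution computation you propose, with the condition $\sigma<\frac{N-2-\gamma}{1+\gamma}$ making the indicial factor positive), extracts a weak $W^{2,t}_{loc}$ limit $\tilde u\in\tilde X$ solving the same problem, and then concludes $\tilde u=u$ by showing the kernel of the transported operator $L_{\gamma,\delta}$ is trivial on $X$ for small $\delta$ (a perturbation of the isomorphism of Theorem \ref{main_linear}); nonnegativity of $u$ is inherited from the $u_\varepsilon$. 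Your approach short-circuits the compactness and uniqueness steps: you compare $u$ itself with $u+\kappa\eta$ (or $u+\eta_\varepsilon$) on $\Omega_\delta\setminus\overline{B_\varepsilon}$, where $\eta(y)=|y|^{-\beta}$ with $\sigma<\beta\le\frac{N-2-\gamma}{1+\gamma}$ satisfies $-L_\gamma\eta\ge 0$, the growth bound $|u(y)|\le C|y|^{-\sigma}$ (which follows from $u\in\tilde X$ and $t>N$ by the scaled Sobolev embedding, as the paper notes) handles $\partial B_\varepsilon$, and the ABP/classical maximum principle for strong $W^{2,t}$ solutions of the uniformly elliptic operator handles the annulus; letting $\varepsilon\to 0$ and $\kappa\to 0$ gives $u\ge 0$. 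This buys a more elementary proof that never needs the injectivity of $L_{\gamma,\delta}$ on $\tilde X$, at the price of using the pointwise bound on $u$ rather than on $f$. One clarification: your anticipated main obstacle is not actually there. The equation (\ref{eq_second_reg}) is the \emph{unperturbed} Cordes operator in the $y$-variable on $\Omega_\delta$; the coefficients $\frac{y_iy_j}{|y|^2}$ are exact, not $O(\delta)$-perturbed (perturbation terms appear only after pulling back to $B_1$, which your argument never needs to do). So the radial barrier $|y|^{-\beta}$ centered at $0\in\Omega_\delta$ works verbatim, the only role of the perturbed domain being the shape of the outer boundary, where $u=0$ and $\eta>0$ make the comparison trivial; and the exponent window is nonempty precisely because $\sigma<\frac{N-2-\gamma}{1+\gamma}$, as you say.
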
 Note in the above lemma that $u$ is arbitrary, but of course we will apply the lemma for our specific $u$. 

\begin{proof} For $ \E>0 $ small we set $ u_\E$ to be a solution of 
\begin{equation} \label{eq_second_reg_eps}
 \left\{ \begin{array}{lcl}
\hfill -L_\gamma(u_\E)(y)=   &=& - f(y) \qquad  \mbox{ in } \Omega_{\delta,\E}:=\Omega_\delta \backslash B_\E,   \\
\hfill u &=& 0 \hfill  \mbox{ on }   \partial \Omega_{\delta,\E}.
\end{array}\right.
\end{equation}  By the maximum principle we have $u_\E \ge 0$ in $\Omega_{\delta,\E}$.   Under the assumption that $ \sigma<\frac{N-2-\gamma}{1+\gamma}$ we can use a maximum principle argument to see that 
\[ 0 \le |y|^\sigma u_\E(y) \le \frac{\sup_{z \in \Omega_\delta \backslash \{0\}}  |z|^{2+\sigma} |f(z)|}{ \sigma (N-1-(\sigma+1)(1+\gamma)},\] for all $ y \in \Omega_{\delta,\E}$.  Using a scaling argument along with the equation satisfied by $u_\E$ we can show that $u_\E$ satisfies second order weighted $L^t$ estimates on $ \Omega_{\delta,\E}$ similar to the estimates that $v$ satisfies on $B_1 \backslash \{0\}$.   Using a diagonal argument and passing to a subsequence one can show there exists some $\tilde{u}$ such that $u_\E \rightharpoonup \tilde{u}$ in $ W^{2,t}_{loc}( \overline{\Omega_\delta} \backslash \{0\})$ and 
$L_\gamma(\tilde{u})(y)=  f(y)$ in $ \Omega_\delta \backslash \{0\}$ with $ \tilde{u}=0$ on $ \partial \Omega_\delta$.  Moreover $ \tilde{u}$ satisfies the same weighted $L^t$ estimates near the origin as $u$.   From this we can conclude that $ \tilde{u} \in \tilde{X}$.   Hence if we can show the kernel of $L_\gamma$ is trivial on $\tilde{X}$ then we'd have $ \tilde{u}=u$ and hence $u$ is nonnegative.     We now transform variables to the unit ball.  Hence its sufficient to show the kernel of $ L_{\gamma,\delta}$ is trivial in $X$  where 
\[ L_{\gamma,\delta}(v):=L_\gamma(v)+ E_\delta(v)+ \gamma \sum_{i,j=1}^N \frac{y_i y_j}{|y|^2} E_\delta^{i,j}(v).\]  It is easily seen that once $\sigma$ is fixed  that for $ \delta>0$ small that the kernel of $L_{\gamma,\delta}$ is trivial.     This completes the proof of the maximum principle. 

\end{proof}

We now let $u$ denote the solution of the nonlinear problem as above.   From the above lemma we have $u  \ge 0$.   Our goal is to now show that $u$ is bounded on $ \Omega_\delta$.  Away from the origin its clear $u$ is bounded.    We assume that $ \delta>0$ is small enough such that $ B_\frac{3}{4} \subset \subset \Omega_{\delta}$.  
 Set $ U(y):=u(y) \phi(y)$ where $ 0 \le \phi \le 1$ is a smooth cut off with $ \phi=1$ in $ B_\frac{1}{4}$ and $ \phi \in C_c^\infty(B_\frac{1}{2})$.  Then a computation shows that 
\begin{equation} \label{GG}
L_\gamma(U)=g \mbox{ in } B_1 \backslash \{0\}, \qquad U=0 \mbox{ on } \partial B_1,
\end{equation} where 
\begin{eqnarray*}
g(y) &=& f(y) \phi(y) + 2 \nabla u \cdot \nabla \phi + u \Delta \phi \\
 &&+ \sum_{i,j=1}^N \frac{y_i y_j}{|y|^2} \left\{ u_{y_i} \phi_{y_j} + u_{y_j} \phi_{y_i} + u \phi_{y_i y_j} \right\}.
 \end{eqnarray*}   Note $ g$ is as smooth near the origin as $ f$ is and recalling the pointwise bound on $u$ near the origin gives $ | f(y)| |y|^{\sigma p} \le C$ on $ \Omega_\delta \backslash \{0\}$ and hence $ |y|^{\sigma p} |g(y)| \le C$ on $B_1 \backslash \{0\}$.   For notational convenience we rename the variable $y$ by $x$ since we are on the unit ball;  but note we are not using the change of variables.   As before we write in spherical harmonics as 
$U(x)=\sum_{k=1}^\infty a_k(r) \psi_k(\theta)$ and we decompose $U$ and $g$ as before.  So we write $U(x)=U_0(r)+U_1(x)$ and $ g(x)=g_0(r)+ g_1(x)$  where $ U_1,g_1$ have no $k=0$ modes.     Then we have $L_\gamma(U_i)(x)=g_i(x)$ in $B_1 \backslash \{0\}$ with $ U_i=0$ on $ \partial B_1$.      Fix $ \sigma_1<0$  but sufficiently close to zero such that $ \sigma_1$ satisfies (\ref{1mode}) (where we are replacing $ \sigma$ with $ \sigma_1$).    By taking $ | \sigma_1|$ smaller we can assume $ \sigma_1 + 2 -\sigma p >0$ and hence  $ |x|^{\sigma_1+2} |g_1(x)| \le C_1$ for all $ 0<|x|<1$ (a standard argument shows that $ g_i$ satisfies the same point wise estimates as $ g(x)$).   This shows that $g_1 \in Y_1$ (with respect to $\sigma_1$ see Theorem \ref{main_linear} part 3).  So we can now apply Theorem \ref{main_linear} part 3 to see that $ U_1 \in X_1$ (again with respect to $\sigma_1$) and hence $U_1 $ is bounded.   To complete the proof we need to show that $U_0$ is bounded.      Consider the proof of 
Theorem  \ref{linear_L_space} part 4 where we obtain an explicit solution for an ode.    Using this we can get an explicit formula for a solution of the equation for $U_0$.  To see this formula really gives $U_0$ we note that $L_\gamma$ is an isomorphism between spaces $X$ and $Y$ (and does not interact between different modes).     So from this we see 
\[ -U_0(R)= \int_R^1 h(r) dr \] where 
\[ h(r):= \frac{1}{r^\frac{N-1}{\gamma+1}} \int_0^r   \frac{\tau^\frac{N-1}{\gamma+1} g_0(\tau)}{\gamma+1}  d \tau. \]  Using the bound $ |g_0(\tau)| t^{\sigma p} \le C$ we see by taking $ \sigma>0$ small enough that we have $U_0$ bounded and this completes the proof. 

\hfill $\Box$

{}


\begin{thebibliography}{}


























 \bibitem{Caf} L. Caffarelli, B. Gidas and J. Spruck. \emph{Asymptotic symmetry and local behaviour of semilinear elliptic equations with critical Sobolev growth}. Commun. Pure Appl. Math. 42 (1989), 271–297.


\bibitem{chen} W. Chen and C. Li, \emph{Classification of solutions of some nonlinear elliptic equations}. Duke Math. J. 63 (1991), 615–622.

\bibitem{cordes_5} M. Chicco, \emph{Equazioni ellittiche del secondo ordine di tipo Cordes con termini di ordine inferiore}, Ann. Mat. Pura Appl. 85, 347-356 (1970)


        
        \bibitem{addd_102} M. Clapp, M. Grossi and A. Pistoia,  \emph{Multiple solutions to the Bahri-Coron problem in domains with a shrinking hole of positive dimension}, Complex Var. and Elliptic Eqns., 57, 1147-1162. 


\bibitem{cordes_2} 
HO. Cordes, \emph{Zero order a priori estimates for solutions of elliptic differential equations}, Proc. Symp. Pure Math. 4, 157-166 (1961)



    \bibitem{Coron}  J.M. Coron,  \emph{ Topologie et cas limite des injections de Sobolev}. C.R. Acad. Sc. Paris, 299, Series I, 209–212.(1984).








\bibitem{Grossi_Pacella} L. Damascelli, M. Grossi and F. Pacella, \emph{Qualitative properties of positive solutions of semilinear elliptic equations in symmetric domains via the maximum principle}, Annales de l'Institut Henri Poincare (C) Non Linear Analysis
Volume 16, Issue 5, September–October 1999, Pages 631-652.
























\bibitem{pert_ori} J. D\'avila and L. Dupaigne, Perturbing singular solutions of the Gelfand
problem. Commun. Contemp. Math. 9 (2007), no. 5, 639-680.



 







        \bibitem{addd_100} M. del Pino, \emph{Supercritical elliptic problems from a perturbation viewpoint},  Discrete and Continuous Dynamical Systems 21 (1), 69, 2008. 


\bibitem{M_2} M. del Pino, P. Felmer and  Monica Musso, \emph{Two-bubble solutions in the super-critical Bahri-Coron's problem,} Calculus of Variations and Partial Differential Equations 16 (2003), no. 2, 113-145 PDF

\bibitem{M_3} M. del Pino, P. Felmer and M. Musso, \emph{Multi-bubble solutions for slightly super-critical elliptic problems in domains with symmetries,} Bull. London Math. Society 35 (2003), no. 4, 513-521



\bibitem{M_1} M. del Pino and  M. Musso, \emph{Super-critical bubbling in elliptic boundary value problems,} Variational problems and related topics (Kyoto, 2002).  1307 (2003), 85-108.















   







\bibitem{Gidas}  B. Gidas, W. Ni and L. Nirenberg, \emph{Symmetry and related properties via the maximum principle}. Commun. Math. Phys. 68 (1979), 525–598. MR0544879 (80h:35043)




\bibitem{gidas}B. Gidas and J. Spruck, \emph{Global and local behavior of positive solutions of nonlinear elliptic equations.} Comm. Pure Appl. Math., 34(4):525-598, 1981.







        
        \bibitem{addd_103} F. Gladiali and M. Grossi, \emph{Supercritical elliptic problem with nonautonomous nonlinearities}, J. Diff. Eqns., 253 (2012), 2616-2645. 
        
        
        \bibitem{addd_101} M. Grossi and F. Takahashi, \emph{Nonexistence of multi-bubble solutions to some elliptic equations on convex domains}, Jour. Funct. Anal., 259 (2010), 904-917. 
        









\bibitem{Korman} P. Korman \emph{Global solution curves for semilinear elliptic equations}, World Scientific Publishing Co. Pte. Ltd.2012. 241 pp. 


\bibitem{Lin_100} CS. Lin and WM. NI, \emph{A counterexample to the nodal domain conjecture and a related
semilinear equation}, Proc. Amer. Mat. Soc., Vol. 102, 1988, pp. 271-277.





\bibitem{cordes_1} A. Maugeri, DK. Palagachev and   LG. Softova, \emph{Elliptic and Parabolic Equations with Discontinuous Coefficients} Wiley, Berlin (2000)



\bibitem{MP} R. Mazzeo and  F. Pacard. \emph{A construction of singular solutions for a semilinear elliptic equation using asymptotic analysis}, J. Diff. Geom. 44 (1996) 331-370.










    \bibitem{Passaseo} D.Passaseo,   \emph{ Nonexistence results for elliptic problems with supercritical nonlinearity
in nontrivial domains.} J. Funct. Anal. 114(1):97–105.(1993).




\bibitem{POHO} S.  Pohozaev, S. (1965). \emph{ Eigenfunctions of the equation $\Delta u + \lambda f(u)=0$.} Soviet. Math. Dokl. 6:1408–1411.








\bibitem{STRUWE}  Struwe, M. (1990). Variational Methods – Applications to Nonlinear Partial Differential Equations and Hamiltonian Systems. Berlin: Springer-Verlag.




\bibitem{cordes_3} G. Talenti, \emph{Sopra una classe di equazioni ellittiche a coefficienti misurabili}, Ann. Mat. Pura Appl. 69, 285-304 (1965)

\bibitem{cordes_4} G. Talenti, \emph{Equazioni lineari ellittiche in due variabili}, Matematiche 21, 339-376 (1966)











        
        \bibitem{addd_104} K Wang and J Wei, \emph{Analysis of Blow-up Locus and Existence of Weak Solutions for Nonlinear Supercritical Problems}, International Mathematics Research Notices, Volume 2015, Issue 10, 1 January 2015, Pages 2634-2670.
        












\end{thebibliography}
\end{document}